\documentclass[12pt, reqno]{amsart}
\usepackage{latexsym, amsmath, amssymb, amsthm, mathscinet, mathtools, enumerate}
\usepackage{nicematrix}
\usepackage{tikz}
\usetikzlibrary{matrix, fit}
\usepackage{cases, verbatim}
\usepackage[active]{srcltx}
\subjclass[2020]{Primary 42B10; Secondary 42B20}
\usepackage{hyperref}
\usepackage{xcolor}
\usepackage{enumitem}

\usepackage[margin=1.5in]{geometry}

\usepackage{tikz}
\usetikzlibrary{arrows.meta,decorations.pathreplacing}
\usetikzlibrary{calc,decorations.pathreplacing}
\newcommand{\CurveLabel}{\Gamma_2}

\newtheorem{theorem}{Theorem}[section]
\newtheorem{definition}{Definition}[section]
\newtheorem{lemma}[theorem]{Lemma}
\newtheorem{corollary}[theorem]{Corollary}
\newtheorem{example}{Example}[section]
\newtheorem{proposition}[theorem]{Proposition}
\newtheorem{rem}{Remark}
\newtheorem*{rem*}{Remark}
\newcommand{\R}{\mathbb{R}}
\newcommand{\N}{\mathbb{N}}
\newcommand{\Z}{\mathbb{Z}}

\newcommand{\TT}{\mathbb{T}_{\theta,t}}

\newcommand{\De}{\mathbb{D}_{\varepsilon}}
\newcommand{\m}{\boldsymbol{m}}
\newcommand{\HH}{{\mathbb{H}}}
\newcommand{\PP}{{\mathbb{P}}}
\newcommand{\EE}{\mathbb{W}(\theta)}
\newcommand{\PHI}{\boldsymbol{\Phi}}
\newcommand{\en}{\boldsymbol{e_n}}
\newcommand{\WW}{\mathbb{W}}
\newcommand{\Q}{\mathcal{Q}}

\title[$L^p$ integrability of functions]{$L^p$ integrability of functions with Fourier support on High codimension manifolds}            

\author[C. Bi]{Cheng Bi}
\address[C. Bi]{Chern Institute of Mathematics, Nankai University, Tianjin 300071, P. R. China, {\tt cbi24158@gmail.com or 9820260175@nankai.edu.cn}}

\author[R. Sun]{Ruichen Sun}
\address[R. Sun]{School of Mathematics and Statistics, Northwestern Polytechnical University, Xi'an 710129, P. R. China, {\tt sunruichen667@gmail.com}}

\begin{document}

	\maketitle
	
	\begin{abstract}
     Let $n,k\ge 2$ and $N_{n,k} = \binom{n+k}{k}-1$. We prove that if $f \in L^{p}(\R^{N_{n,k}})$ and $\hat{f}$ is supported on the $n$-dimensional Parsell-Vinogradov manifold, then $f \equiv 0$ whenever $1\le p <  p_{c}(n,k) = n+ \binom{n+k}{n+1}$. This improves the bound by Agranovsky and Narayanan, and generalizes the moment-curve result of Guo-Iosevich-Zhang-Zorin-Kranich. The exponent $p_{c}(n,k)$ is sharp when $n=2$ or $k=2$, since it matches the convergence exponent $p_{n,k}^{\mathrm{Tar}}$ of Tarry's problem. Our result also yields a general lower bound $p_{n,k}^{\mathrm{Tar}} \ge p_{c}(n,k)$, improving Ikromov's bound by $n-1$. The proof combines the rotation symmetry of Parsell-Vinogradov manifold with the essential orthogonality of rotated covering boxes.
	\end{abstract}

	\section{Introduction}
    The integrability of a function can be influenced by the geometry of the support of its Fourier transform. For a given closed set $S\subset \R^N$, a basic question is to determine the range of $p$ such that if
    \begin{equation}\label{S}
        f\in L^p(\mathbb R^N) \quad \mbox{and} \quad
        \operatorname{supp}\widehat f\subset S,
        \quad \mbox{then}\quad
        f\equiv 0.
    \end{equation}

 In general, the answer depends on the dimension of $S$. Agranovsky and Narayanan
 \cite{AgranovskyNarayanan2004} proved that if $S$ is a smooth $s$-dimensional submanifold, then \eqref{S} holds provided $1\le p \le \frac{2N}{s}$. This range is essentially sharp when $s\ge \frac{N}{2}$.   Senthil Raani
 \cite[Theorem 2.3]{SenthilRaani2014} subsequently obtained related
 results for Fourier transforms supported on sets of finite packing
 measure. However, for submanifolds of high codimension that carry additional curvature and algebraic structure, the range need not be
 sharp.

 An important example is the moment curve
 \[
 \Gamma_k = \left\{  (t,t^2,\ldots,t^k) \, : \, t\in [0,1] \right\}.
 \]
  Guo, Iosevich, Zhang and Zorin-Kranich
 \cite{GuoIosevichZhangZorinKranich2025} proved that if $S = \Gamma_k$ or a sufficiently small smooth perturbation of $\Gamma_k$, then the sharp range in \eqref{S} is $1\le p < 1+ \frac{k(k+1)}{2}$.
 It also agrees with the critical output exponent
 arising in Fourier extension for the moment curve; see
 \cite{Drury1985}. More recently, Duan, Pham and Ryou
 \cite[Theorem~1.1]{DuanPhamRyou2025} established endpoint and fractal analogues for
 subsets of the moment curve.
 \subsection{Statement of the main result}\label{sec-sta}
 We establish a higher-dimensional analogue of the moment curve uniqueness theorem for Parsell-Vinogradov manifold.
Let $n\ge 1$, $k\ge 2$.  Set 
\[
N_{n,k} =   \binom{n+k}{k}-1.
\]
For $x= (x_1,\cdots, x_n) \in \R^n$ and multi-index $\alpha = (\alpha_1,\cdots,\alpha_n)\in \N^n$, we write $x^\alpha = x_1^{\alpha_1}\cdots x_n^{\alpha_n}$.
Define
\begin{equation*}
    \PHI(x) = \PHI_{n,k}(x) = \left( x^\alpha \right)_{1 \le |\alpha | \le k}.
\end{equation*}
The Parsell-Vinogradov manifold is defined as
\[
\mathcal{V}_{n,k} : = \PHI \left( [0,1]^n \right),
\]
which is an $n$-dimensional manifold in $\R^{N_{n,k}}$.
 Note that $\mathcal{V}_{1,k}$ corresponds to the case of the moment curve.

Set
\[
p_{c}(n,k) = n + \binom{n+k}{n+1}.
\]
Our main result is as follows:
\begin{theorem}\label{thm:main}
    Let $n\ge 2$ and $k\ge 2$. Suppose that $1\le p < p_c(n,k)$. If $f\in L^p (\R^{N_{n,k}})$ and $\, \mathrm{supp}\hat{f} \subset \mathcal{V}_{n,k}$, then $f\equiv 0$.
\end{theorem}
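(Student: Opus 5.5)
We follow the strategy of Guo--Iosevich--Zhang--Zorin-Kranich for the moment curve and argue by contradiction. Suppose $f\in L^p(\R^{N_{n,k}})$ with $1\le p<p_c(n,k)$ and $\operatorname{supp}\hat f\subset\mathcal V_{n,k}$, and $f\not\equiv 0$. Since the Fourier support is compact, $f$ is smooth and bounded, so $f\in L^q$ for all $q\ge p$. We may therefore assume $p$ is close to $p_c$, with $p\ge 2$. After multiplying by a suitable Schwartz function on the Fourier side (convolving $\hat f$ with a bump) and localizing, we may also assume $\hat f$ is a nonzero distribution supported on a small cap $\PHI(Q_0)$. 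The quantity to test is
\[
\|f\|_{L^2(B_R)}^2 \approx R^{N_{n,k}}\,\big\|\hat f*\phi_{R^{-1}}\big\|_{L^2}^2 ,
\]
with $\phi_{R^{-1}}$ a bump at scale $R^{-1}$.

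\textbf{Step 1: a lower bound.} Since $\hat f\ne0$ is a distribution of some finite order on an $n$-dimensional manifold, the mollification $\hat f*\phi_{R^{-1}}$ has $L^2$ mass at least comparable to that of a smooth density on the $R^{-1}$-neighbourhood of $\mathcal V_{n,k}$. This gives
\[
\|f\|_{L^2(B_R)}^2\gtrsim R^{N_{n,k}}\cdot R^{-(N_{n,k}-n)} = R^{n}
\]
along a sequence $R\to\infty$; if $\hat f$ is a higher-order distribution the bound only improves. The main point is then to bound $\|f\|_{L^2(B_R)}$ from above by $o(R^{n/2})$ in terms of $\|f\|_{L^p}$, via a decomposition of the $R^{-1}$-neighbourhood of the manifold into anisotropic boxes.

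\textbf{Step 2: the covering boxes.} Near a point $\PHI(x_0)$, the natural covering box has sides $R^{-|\alpha|/k}$ in the direction of $\partial^\alpha\PHI(x_0)$, for $|\alpha|\le k$. These are the Parsell--Vinogradov analogues of the moment-curve boxes, and the neighbourhood is covered by about $R^{n/k}$ of them. Each box $\theta$ has volume
\[
R^{-\sum_{|\alpha|\le k}|\alpha|/k}=R^{-\frac{n}{k}\binom{n+k}{n+1}},
\]
using $\sum_{1\le|\alpha|\le k}|\alpha| = n\binom{n+k}{n+1}$.

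Write $f=\sum_\theta f_\theta$ on $B_R$. Hölder on each piece gives
\[
\|f_\theta\|_{L^2(B_R)}\le |B_R|^{1/2-1/p}\,\|f_\theta\|_{L^p}.
\]
To pass from $\|f_\theta\|_p$ to $\|f\|_p$ we need a square-function or almost-orthogonality bound. This is where the rotation symmetry enters. The affine maps $x\mapsto x+x_0$ and the linear rotations $x\mapsto Ux$ on $\R^n$ lift to linear maps of $\R^{N_{n,k}}$ preserving $\mathcal V_{n,k}$ (up to an affine change). Applying them, we can reduce to boxes centred at $x_0=0$, with arbitrary rotated orientations. The essential orthogonality of rotated covering boxes should then show that the dual slabs $\theta^*$ overlap only boundedly, or with an $R^{\varepsilon}$ loss, in the relevant sense. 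That yields an estimate of the form
\[
\|f\|_{L^2(B_R)}^2 \lesssim R^{\varepsilon}\sum_\theta \|f_\theta\|_{L^2(w_{B_R})}^2 ,
\]
together with a local bound on each $\|f_\theta\|$ by $\|f\|_{L^p(w_{\theta^*})}$.

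\textbf{Step 3: counting exponents.} Combining the estimates, the upper bound should become
\[
\|f\|_{L^2(B_R)}^2\lesssim R^{\varepsilon}\, R^{a(p)}\,\|f\|_{L^p}^2 ,
\]
where $a(p)$ comes from the volume of $\theta^*$ (namely $R^{\frac nk\binom{n+k}{n+1}}$ times the spatial scale $R$ in the unresolved directions) and from the number of boxes. The exponent $p_c(n,k)=n+\binom{n+k}{n+1}$ should be exactly where $a(p)=n$: the $\binom{n+k}{n+1}$ comes from the homogeneous dimension of the boxes, and the extra $n$ from the $n$ free parameters. For $p<p_c$ we get $a(p)<n$, which contradicts Step 1 as $R\to\infty$. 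Tail contributions of $f$ outside $B_R$ are handled by splitting $\|f\|_{L^p}$ into near and far parts and using the $L^p$ tail decay $\|f\|_{L^p(B_R^c)}\to0$.

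\textbf{Main obstacle.} I expect the hardest step to be Step 2: proving the essential orthogonality, i.e.\ bounded overlap of the dual boxes $\theta^*$ after rotation, with the precise box dimensions. This is where the multiparameter geometry of $\mathcal V_{n,k}$ differs from the curve case. My first attempt would be to compute the dual box $\theta^*$ explicitly at $x_0=0$: it is essentially $\{|\xi_\alpha|\le R^{|\alpha|/k}\}$. I would then show, via the lifted action of $SO(n)$ and translations, that two boxes with centres at distance $\gg R^{-1/k}$ have duals intersecting in a set of relatively small measure, using a nondegeneracy (Vandermonde-type) determinant for the monomials.
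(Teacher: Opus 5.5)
\textbf{There is a genuine gap.} The decisive exponent count in Step 3 is asserted ("should be exactly where $a(p)=n$") rather than derived, and with the covering you chose it cannot come out right.

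Your caps are the isotropic Frenet caps of parameter diameter $\delta=R^{-1/k}$, with sides $\delta^{|\alpha|}$. For these the rotation symmetry buys nothing. The lifted map $\HH_U$, $U\in O(n)$, sends a cap centred at $\PHI(0)$ to a comparable cap, so there is no family of genuinely different coverings to play off against each other. Distinct caps are already essentially disjoint. The honest count for this covering gives only Ikromov's range $p<1+\binom{n+k}{n+1}$, which is what the paper calls the "natural covering" bound. Concretely, take $\delta^{-n}$ caps of volume $\delta^{nA_{n,k}}$. Then $\|\check\eta\|_{L^1}\lesssim\delta^{-n}$ and $\|\check\eta\|_{L^2}^2\sim\delta^{nA_{n,k}-n}$. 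Interpolation gives $\|\check\eta\|_{L^{p'}}\lesssim\delta^{n(A_{n,k}+1)/p-n}$, which tends to $0$ only for $p<1+A_{n,k}$.

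\textbf{The missing idea.} The paper uses \emph{anisotropic} caps, thin only in the $x_n$ parameter direction. These are the boxes $\mathbb{T}_{\theta,t}\De\Q$, with side $\varepsilon^{\alpha_n}$ in direction $\alpha$; there are about $\varepsilon^{-1}$ of them per family, each of volume $\varepsilon^{A_{n,k}}$. The paper then averages the multipliers $\eta_{\varepsilon,\varepsilon\m}$ over about $\varepsilon^{-(n-1)}$ sheared directions $\m$.
\begin{itemize}
\item The averaging costs nothing in $L^1$: the bound stays $\lesssim\varepsilon^{-1}$ by the triangle inequality.
\item Boxes from families $\m\neq\m'$ are transversal. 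Each meets $O(1+|\m-\m'|)$ boxes of the other family (Lemma~\ref{iota}). Each intersection has volume $\lesssim\varepsilon^{A_{n,k}}(1+|\m-\m'|)^{-n-1}$ (Theorem~\ref{volest}).
\item Together these give $\|\check\eta_\varepsilon\|_{L^2}^2\lesssim\varepsilon^{A_{n,k}+n-2}$, a gain of $\varepsilon^{n-1}$ over a single family.
\end{itemize}
$L^1$--$L^2$ interpolation then yields $\|\check\eta_\varepsilon\|_{L^{p'}}\lesssim\varepsilon^{p_c(n,k)/p-1}\to0$, so $\|f\|_{L^\infty}\le\|f\|_{L^p}\|\check\eta_\varepsilon\|_{L^{p'}}\to0$. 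So the orthogonality that matters is between \emph{partially overlapping} frequency boxes of different slab families. It is not bounded overlap of dual boxes $\theta^*$, which is what you named as the main obstacle.

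\textbf{Step 1 is also wrong as stated.} If $\hat f=g\,d\sigma$ is a smooth density on an $n$-dimensional compact manifold in $\R^{N_{n,k}}$, then $\|f\|_{L^2(B_R)}^2\sim R^{N_{n,k}-n}$ (Agmon--H\"ormander), not $R^{n}$. Your computation mixes an $L^1$-normalized and an $L^\infty$-normalized $\phi_{R^{-1}}$.

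With the correct lower bound, the trivial H\"older estimate $\|f\|_{L^2(B_R)}^2\lesssim R^{N_{n,k}(1-2/p)}\|f\|_{L^p}^2$ contradicts it exactly when $p<2N_{n,k}/n$. That is precisely the Agranovsky--Narayanan range. Everything beyond it rests on Steps 2--3, which contain no actual estimate: the square-function bound and the local control of $f_\theta$ by $\|f\|_{L^p(w_{\theta^*})}$ are stated as expectations.

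Finally, this is not the strategy of Guo--Iosevich--Zhang--Zorin-Kranich. There, as in this paper, one never localizes to $B_R$. One builds $\eta_\varepsilon\equiv1$ near the manifold, writes $f=f*\check\eta_\varepsilon$, and sends $\varepsilon\to0$ with $f$ fixed.
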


 As we will illustrate in Section \ref{sec-tar}, Theorem \ref{thm:main} is sharp up to the endpoint when $n=2$ or $k=2$. We believe that Theorem \ref{thm:main} is also sharp when $n\ge 3$ and $k\ge3$, for this reason it may be necessary to extend  \cite[Theorem 1.2]{BasuGuoZhangZorinKranich2026} to higher dimensions. Moreover, the range $1\le p < p_c(n,k)$ is better than the general result in \cite{AgranovskyNarayanan2004}, which only gives $1\le p \le \frac{2 N_{n,k}}{n}$.

 For the moment curve, the proof in \cite{GuoIosevichZhangZorinKranich2025} constructs smooth Fourier cutoffs adapted to Frenet boxes of scale $\delta \times \delta^2 \times \cdots \times \delta^k$ that covering $\Gamma_k$, and uses an interpolation argument. For $\mathcal{V}_{n,k}$ with $n\ge 2$, however, the natural covering method may only yield the range $1\le p < 1+ \binom{n+k}{n+1}$. To reach the full range in Theorem \ref{thm:main}, we use the rotaion symmetry of Parsell-Vinogradov surface.  This idea has already been used in \cite[Section 6]{BasuGuoZhangZorinKranich2026} to obtain lower bound of the convergence exponent of Tarry's problem.
The projections of these covering boxes onto the first $n$ coordinates are rectangular tiles of scale $1\times  \cdots \times 1 \times \delta$ (see the picture below for the case $n=2$).  
\begin{figure}[htbp]
\centering
\resizebox{0.7\linewidth}{!}{%
\begin{tikzpicture}[
  x=1cm, y=1cm,
  line cap=round, line join=round,
  font=\normalsize,
  axis/.style={line width=1pt,-{Latex[length=2.8mm]}},
  curve/.style={line width=.95pt},
  tube/.style={line width=.65pt,fill=none}
]

\begin{scope}
  \draw[axis] (0,0) -- (4.05,0) node[right] {$t$};
  \draw[axis] (0,0) -- (0,2.95);

  \foreach \x in {.76,1.52,2.28,3.04}
    \draw[tube] (\x,0) -- (\x,.09);
  \node at (1.95,-.28) {$\cdots$};

  \draw[
    decorate,
    decoration={brace,mirror,amplitude=4pt},
    line width=.7pt
  ]
    (0,-.20) -- (.76,-.20)
    node[midway,below=6pt] {$\delta$};

  \draw[curve,domain=0:3.50,samples=100,smooth]
    plot (\x,{.21*\x*\x});
  \node[above] at (3.50,2.70) {$\CurveLabel$};

  \draw[tube] (0,0) rectangle (.76,.18);

  \foreach \t in {1.10,3.13}{
    \pgfmathsetmacro{\yy}{.21*\t*\t}
    \pgfmathsetmacro{\ang}{atan(.42*\t)}
    \begin{scope}[shift={(\t,\yy)},rotate=\ang]
      \draw[tube] (-.39,-.085) rectangle (.39,.085);
    \end{scope}
  }
  \fill (.76,{.21*.76*.76}) circle (1.5pt);

  \foreach \t in {1.75,1.96,2.17}
    \fill (\t,{.21*\t*\t+.25}) circle (.8pt);

  \node at (3.36,1.02) {$\delta\times\delta^2$};

  \node[
    anchor=north,
    align=center,
    font=\small,
    inner sep=0pt
  ] at (2.00,-1.00)
    {Covering boxes of $\Gamma_2$};
\end{scope}

\begin{scope}[xshift=6.35cm,yshift=0.05cm]
  \def\NumStrips{5}
  \def\StripLength{3.20}
  \def\StripHeight{2.80}
  \def\RotationAngle{-45}

  \pgfmathsetmacro{\StripWidth}{\StripHeight/\NumStrips}
  \pgfmathtruncatemacro{\LastStrip}{\NumStrips-1}
  \pgfmathsetmacro{\CenterX}{\StripLength/2}
  \pgfmathsetmacro{\CenterY}{\StripHeight/2}

  \begin{scope}[
    rotate around={\RotationAngle:(\CenterX,\CenterY)}
  ]
    \foreach \j in {0,...,\LastStrip}{
      \draw[tube]
        (0,{\j*\StripWidth})
        rectangle
        (\StripLength,{(\j+1)*\StripWidth});
    }
  \end{scope}

  \foreach \j in {0,...,\LastStrip}{
    \draw[tube]
      (0,{\j*\StripWidth})
      rectangle
      (\StripLength,{(\j+1)*\StripWidth});
  }

  \draw[axis] (0,0) -- (4.10,0) node[right] {$x_1$};
  \draw[axis] (0,0) -- (0,3.20) node[above left] {$x_2$};

  \node at (3.30,3.30) {$1\times\delta$};

  \draw[axis]
    (2.00,3.62)
    .. controls (1.82,3.92) and (1.40,3.92)
    .. (1.05,3.78);

  \node[
    anchor=north,
    align=center,
    font=\small,
    inner sep=0pt
  ] at (1.95,-1.05)
    {Projections of covering boxes\\
     of $\mathcal{V}_{2,k}$};
\end{scope}
\end{tikzpicture}}
\end{figure}
We shall define Fourier multiplier by first rotating these boxes and then taking averages, see Definition \ref{def-FM}. The rotation will be realized by a shear transform. 
 
 Another main new ingredient is the observation that the rotated covering boxes are essentially orthogonal. In other words, it holds that
\begin{equation}\label{orth}
    \Big\|\sum_{\theta } {\eta}_{\theta} \Big\|_{L^2}^2\sim \sum_{\theta }\| {\eta}_{\theta} \|_{L^2}^2,
\end{equation}
where for each rotation angle $\theta$, the smooth Fourier multiplier $\eta_\theta$ is the sum of a partition of unity near $\mathcal{V}_{n,k}$, see Section \ref{2.1} for the notations. In the moment curve case, the essential orthogonality directly comes from the disjointness of translated 
boxes, whereas in higher dimensions one needs to establish the volume estimates of the intersection of boxes in different directions, see Theorem \ref{volest}.

    \subsection{Fourier extension and the  Tarry's problem}\label{sec-tar}
Theorem \ref{thm:main} has an immediate interpretation in terms of Fourier extension. Define
\[
	E_{\boldsymbol{\Phi}}g(y)
	=
	\int_{[0,1]^n}
	g(\xi)e^{iy\cdot\boldsymbol{\Phi}(\xi)}\,d\xi,
	\qquad
	y\in\mathbb R^{N_{n,k}}.
	\]
    The extension problem asks for which exponents $p,q$ an estimate of the form
   \begin{equation}\label{ex}
    \|E_{\boldsymbol{\Phi}}g\|_{L^p(\mathbb R^{N_{n,k}})}
	\leq C_{p,q}\|g\|_{L^q([0,1]^n)}
   \end{equation}
    can hold. Since $E_{\boldsymbol{\Phi}}g\ne 0$ for nonzero $g$, and  supp$\,\widehat{E_{\boldsymbol{\Phi}}g} \subset \mathcal{V}_{n,k}$, it follows from Theorem \ref{thm:main} that 
    \[
    E_{\boldsymbol{\Phi}}g \notin L^p(\R^{N_{n,k}}), \quad \mbox{provided} \quad 1\le p < p_c(n,k).
    \]
    Therefore, the global extension estimate \eqref{ex} (and its dual version) fails for such $p$.  Theorem \ref{thm:main} is indeed stronger than the failure
	of a particular extension estimate.
    The Fourier transform of an
	arbitrary $L^p$ function in the theorem may be any distribution
	supported on $\mathcal V_{n,k}$ and need not be a measure of the form
	$\widehat{E_{\PHI} g}$. Thus Theorem \ref{thm:main} treats a larger class of
	Fourier distributions than those produced by the standard extension
	operator.

    On the other hand, the sharpness of Theorem \ref{thm:main} is related to the convergence exponent of Tarry's problem, which is defined to be
    \begin{equation*}
    p^{\mathrm{Tar}}_{n,k}
	:=
	\inf\left\{
	p\geq1:
	E_{\boldsymbol{\Phi}}1\in L^p(\mathbb R^{N_{n,k}})
	\right\}.
    \end{equation*}
    The problem of determining such exponent goes back to the work of Hua
	\cite{Hua1952}. Arkhipov, Karatsuba and Chubarikov
	\cite[Theorem~5]{ArkhipovKaratsubaChubarikov1980} proved the one-dimensional
	result $p^{\mathrm{Tar}}_{1,k}=1+\frac{k(k+1)}2$, and obtained a general upper bound
	$
	p^{\mathrm{Tar}}_{n,k}
	\leq
	n\binom{n+k}{n+1}+n
	$ \cite[Theorem~7]{ArkhipovKaratsubaChubarikov1980}.
	The quadratic case was determined by Mockenhaupt
	\cite{Mockenhaupt1996}, who proved $p^{\mathrm{Tar}}_{n,2}=2(n+1)$.
	More recently, Basu, Guo, Zhang and Zorin-Kranich
	\cite[Theorem~1.2]{BasuGuoZhangZorinKranich2026} resolved the problem in dimension
	two for every $k\geq2$ with $p^{\mathrm{Tar}}_{2,k}=2+\binom{k+2}{3}$.
     Theorem \ref{thm:main} agrees with all currently known
exact cases in Tarry's problem. In particular,  \[
    p_{n,k}^{\mathrm{Tar}} = p_{c}(n,k), \quad \mbox{for} \quad n=2 \ \ \mbox{or} \ \ k=2.
    \]
    Thus, Theorem \ref{thm:main} is sharp\footnote{Recently, we have found some evidence that Theorem \ref{thm:main} is also sharp in higher dimensions.} in the case $n= 2$ or $k=2$.
    
    Moreover, it is direct from \cite[Theorem 5]{ArkhipovKaratsubaChubarikov1980} and Theorem \ref{thm:main} that 
    \begin{corollary}\label{cor-Tar}
      For all $n\ge 1$ and $k\ge 2$,  $p^{\mathrm{Tar}}_{n,k} \ge p_c(n,k)$. 
    \end{corollary}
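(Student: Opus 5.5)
The plan is to argue by contradiction, combining Theorem \ref{thm:main} with the upper-bound side of the Arkhipov–Karatsuba–Chubarikov analysis. Fix $n\ge 1$ and $k\ge 2$, and suppose $p^{\mathrm{Tar}}_{n,k} < p_c(n,k)$. By the definition of the infimum we can then choose $p$ with $p^{\mathrm{Tar}}_{n,k} < p < p_c(n,k)$, $p\ge 1$, such that $E_{\PHI}1\in L^p(\R^{N_{n,k}})$. Strictly, the infimum only produces some exponent in this window at which $E_{\PHI}1$ is integrable. That is all we need, although monotonicity of the set of admissible exponents also follows, since $E_{\PHI}1$ is bounded by $1$.

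Next we check the hypotheses of Theorem \ref{thm:main} for $f=E_{\PHI}1$. For $y\in\R^{N_{n,k}}$ we have $f(y)=\int_{[0,1]^n} e^{iy\cdot\PHI(\xi)}\,d\xi$, so $\hat f$ is, up to a constant and a reflection convention, the pushforward of Lebesgue measure on $[0,1]^n$ under $\PHI$. In particular $\operatorname{supp}\hat f\subset\mathcal V_{n,k}$. If the sign convention produces $-\mathcal V_{n,k}$, we replace $f$ by $f(-\cdot)$, which has the same $L^p$ norm. Moreover $f\not\equiv 0$, because $f(0)=1$ and $f$ is continuous.

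For $n\ge2$, Theorem \ref{thm:main} now forces $f\equiv0$, a contradiction. For $n=1$ the theorem does not apply as stated. In that case $p_c(1,k)=1+\binom{k+1}{2}=1+\frac{k(k+1)}{2}$, which is exactly the value $p^{\mathrm{Tar}}_{1,k}$ computed in \cite[Theorem~5]{ArkhipovKaratsubaChubarikov1980}. One could equally invoke the moment-curve result of \cite{GuoIosevichZhangZorinKranich2025}.

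I do not expect a genuine obstacle here. The only points needing care are the reflection convention in the Fourier support, the choice of $p\ge1$ strictly inside the interval, and the separate treatment of the case $n=1$.
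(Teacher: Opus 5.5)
Your argument is correct and is the same as the paper's. For $n\ge 2$, apply Theorem \ref{thm:main} to the nonzero continuous function $E_{\PHI}1$, whose Fourier transform is a measure on $\mathcal{V}_{n,k}$ (after a harmless reflection or dilation depending on the Fourier convention); for $n=1$, use \cite[Theorem~5]{ArkhipovKaratsubaChubarikov1980}, which gives $p^{\mathrm{Tar}}_{1,k}=1+\frac{k(k+1)}{2}=p_c(1,k)$, and of that theorem only the lower-bound half is needed, despite your opening reference to its ``upper-bound side''.
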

    This improves the general lower bound $
	p^{\mathrm{Tar}}_{n,k}
	\geq
	1+\binom{n+k}{n+1}
	$ obtained by Ikromov \cite[Theorem~1]{Ikromov1997}.\\

	\section{Construction of the Fourier multiplier}\label{2.1}
    We shall construct a family of smooth cutoffs $\{\eta_{\varepsilon}\}_{\varepsilon>0}$ that is supported in a neighborhood of $\mathcal{V}_{n,k}$. If $\eta_\varepsilon \equiv 1$ near $\mathcal{V}_{n,k}$, then $f=f\ast\check{\eta}_\varepsilon$ provided supp$\, \hat{f} \subset \mathcal{V}_{n,k}$. And Theorem \ref{thm:main} will be proved via proper $L^p$ estimates of $\check{\eta}_\varepsilon$, see Section \ref{4}. 

     Unless otherwise stated, the set $\{\alpha \in \N^n : 1\le |\alpha | \le k \}$ is is order by ``$\prec$" as follows:
	\begin{enumerate}\label{order}
		\item[1.] first, by increasing the total degree \(|\alpha|=\sum_{i=1}^{n}\alpha_i\);
        \item[2.] for the same total degree, by increasing  \(\alpha_n\);
		\item[3.] when both the total degree and $\alpha_n$ agree, by decreasing lexicographical order of the remaining indices.
	\end{enumerate}

    Let $\{\boldsymbol{e_i}\}_{1\leq i\leq n}$ be a standard orthonormal basis of $\mathbb{R}^n$.
For $\theta=(\theta_1,\theta_2,\ldots,\theta_{n-1})\in \R^{n-1}$, we define the shear transform
    \begin{equation}\label{shear}
      \WW(\theta) = (\WW_1(\theta),\cdots,\WW_n(\theta)) =   \begin{pmatrix}
\mathbb{I}_{n-1} & 0 \\
\theta^T & 1 
\end{pmatrix}.
    \end{equation}
    The sheared Parsell-Vinogradov parametrization associated with 
	\(\WW(\theta)\) is 
	\begin{equation*}
		\boldsymbol{\Phi}_{\theta}(x)
		=
		\boldsymbol{\Phi}\left( \WW(\theta) x \right).
	\end{equation*}
	\begin{definition}\label{defT}
		Let $\theta \in [0,1]^{n-1}$ and  $t\in [-1,1]$.
		The Parsell-Vinogradov frame matrix corresponding to
		\((\theta,t)\) is 
		\begin{equation*}
			\mathbb{T}_{\theta,t}
			:=
			\left(\mathbb{T}_{\theta,t}^\alpha\right)_{1\le |\alpha| \le k},
		\end{equation*}
		where colume vectors $\mathbb{T}_{\theta,t}^\alpha$ 
		are determined by
        \begin{equation*}
\boldsymbol{\Phi}_{\theta}(x + t \en) - \boldsymbol{\Phi}_{\theta}(t \en) = \sum_{1\le |\alpha| \le k} \mathbb{T}_{\theta,t}^{\alpha} \, x^\alpha.
        \end{equation*}
	\end{definition}

	We shall see below that \(\TT \in \mathrm{GL}(N_{n,k})\) is a block lower triangular matrix. For $2\le j\le k$ and $|\alpha|=j$, the first $\binom{n+j-2}{n-1}-1$ entries of $\TT^\alpha$ are zero. 
    This observation will play an important role in the following proofs.
	The example for the case $n=k=2$ is provided below.
	\begin{example}[Parsell-Vinogradov Frame for $\mathcal{V}_{2,2}$]\label{example}
		\begin{equation*}
			\TT= 
			\begin{pmatrix}
1 & 0 & 0 & 0 & 0\\
\theta & 1 & 0 & 0 & 0\\
0 & 0 & 1 & 0 & 0\\
t & 0 & \theta & 1 & 0\\
2t\theta & 2t & \theta^{2} & 2\theta & 1
\end{pmatrix}
		\end{equation*}
	\end{example}

   Choose $\varphi\in C_0^\infty (\R^{N_{n,k}})$ with $0\le \varphi \le 1$, supp$\,\varphi \subset[-2,2]^{N_{n,k}}$, and $\varphi \equiv 1$ on $[-1,1]^{N_{n,k}}$. Define the scaling matrix 
   \begin{equation*}
        \mathbb{D}_\varepsilon = \mbox{diag} \{ \varepsilon^{\alpha_n}\}_{1\le |\alpha| \le k}.
    \end{equation*}
    For  $r_0 \gg 1$ and $\iota \in [-r_0/\varepsilon, r_0/\varepsilon] \cap \Z$, let
    \begin{equation*}
         \kappa_{\varepsilon, \theta,\iota} (\xi) : = \varphi \left( \De^{-1} \mathbb{T}^{-1}_{\theta, \, \varepsilon \iota/r_0 }  \left( \xi - \PHI_\theta \left( \frac{\varepsilon \iota}{r_0} \en \right)\right)\right).
    \end{equation*}
    
    We construct a partition of unity. Let 
    $$ \psi_{\varepsilon,\theta, -\lfloor \frac{r_0}{\varepsilon} \rfloor } := \kappa_{\varepsilon,\theta, -\lfloor \frac{r_0}{\varepsilon} \rfloor },$$ 
    and for $\iota > -\lfloor r_0/\varepsilon\rfloor$ in the same index range, set
    \begin{equation}\label{def-psi}
        \psi_{\varepsilon, \theta, \iota} : = \kappa_{\varepsilon,\theta, \iota} \prod_{-r_0/\varepsilon \le \iota' < \iota} \left( 1 - \kappa_{\varepsilon,\theta, \iota'} \right).
    \end{equation}
    Define
    \begin{align}\label{def-eta}
         \begin{split}
             \eta_{\varepsilon,\theta} & = \sum_{-r_0/\varepsilon \le \iota \le r_0/\varepsilon} \psi_{\varepsilon,\theta, \iota}\\
         & = 1 - \prod_{-r_0/\varepsilon \le \iota \le r_0/\varepsilon} \left( 1- \kappa_{\varepsilon, \theta, \iota}\right)\in C_0^\infty (\R^{N_{n,k}}).
         \end{split}
    \end{align}
    It follows that
     $\eta_{\varepsilon,\theta} \equiv 1$ on a small open neighborhood of $\mathcal{V}_{n,k}$.

	\begin{definition}\label{def-FM}
		Let $\varepsilon>0$.
		Define the Fourier multiplier
		\begin{equation*}
			\eta_{\varepsilon}
            := \big|\N \cap [0,\varepsilon^{-1}]\big|^{-n+1} \sum_{\boldsymbol{m} \in (\N \cap [0,\varepsilon^{-1}])^{n-1} } \eta_{\varepsilon,\varepsilon\boldsymbol{m}}.
		\end{equation*}
	\end{definition}
    \begin{rem}
        This definition exploits the rotation symmetry discussed in Section \ref{sec-sta}: we sum the partition of unity in different directions $\theta = \varepsilon \boldsymbol{m}$ and then take the average in $\boldsymbol{m}$. 
       To obtain desired $L^p$ estimate of $\check{\eta}_{\varepsilon}$, the main difficulty is to bound the volume of the intersection between different covering boxes, see Theorem \ref{volest}.
    \end{rem}

    Definition \ref{defT} extends naturally to $(\theta,t) \in \R^{n-1} \times \R$. We conclude this section with a characterization of $\TT$. First, for each $R \in$ GL$(n)$,  let $\HH_R$ be the unique matrix such that 
    $$\HH_R \boldsymbol{\Phi} (x) = \boldsymbol{\Phi}(Rx),\quad x\in \R^n.$$ 
    For $t\in \R$, write $\PP_t$ the translation in $\en$ direction: 
    $$\PP_t \boldsymbol{\Phi} (x) = \boldsymbol{\Phi}(x+t\en) - \boldsymbol{\Phi}(t\en), \quad x\in \R^n.$$ 
    Note that $\De = \HH_{D_{\varepsilon}}$, where $D_\varepsilon = \mathrm{diag}\{1,\cdots,1,\varepsilon\}\in$ GL$(n)$.
    \begin{proposition}\label{prop-HP}
    Let $R,T\in$ $\mathrm{GL}(N_{n,k})$ and $t,s\in \R$. 
        \begin{itemize}
            \item[(i)]  $\HH_R$ is block diagonal: 
            $$\HH_R =\mathrm{diag}\{ H_R^{(1)},\cdots, H_{R}^{(k)} \}.$$
            Moreover, $\HH_{\mathbb{I}_n} = \mathbb{I}_{N_{n,k}}$ and $\HH_{R} \HH_T = \HH_{R\,T}$. For $R= \WW(\theta)$, each block $H_R^{(j)}$ is lower triagular with all diagonal entries equal to $1$; in particular, $\det H_R^{(j)} = 1$.
            \item[(ii)] $\PP_t$ is block lower triangular. For $1\le j\le k$, the $j$-th diagonal block is the identity matrix of the same order as $H_R^{(j)}$. Moreover, $\PP_0 = \mathbb{I}_{N_{n,k}}$ and $\PP_t \PP_s = \PP_{t+s}$.
            \item[(iii)] $\TT = \HH_{\EE} \PP_t = \PP_t \, \HH_{\EE}$ and $\det \TT =1$.
        \end{itemize}
    \end{proposition}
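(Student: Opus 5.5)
The plan is to read everything off the polynomial identities defining $\HH_R$ and $\PP_t$, and to use the grading of $\{\alpha: 1\le|\alpha|\le k\}$ by total degree together with the order $\prec$.

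For (i), note that $(Rx)^\alpha$ is a homogeneous polynomial of degree $|\alpha|$ in $x$. Expanding it in the monomials $x^\beta$ with $|\beta|=|\alpha|$ shows that $\HH_R$ maps degree-$j$ coordinates only to degree-$j$ coordinates, so $\HH_R$ is block diagonal with blocks indexed by $j$. The monomials $\{x^\beta\}_{1\le|\beta|\le k}$ are linearly independent as functions on $\R^n$, so the coefficient matrix is unique. Uniqueness then gives the remaining identities:
\begin{itemize}
\item $\HH_{\mathbb{I}_n}\PHI(x)=\PHI(x)$ forces $\HH_{\mathbb{I}_n}=\mathbb{I}$;
\item $\HH_R\HH_T\PHI(x)=\HH_R\PHI(Tx)=\PHI(RTx)$ forces $\HH_R\HH_T=\HH_{RT}$.
\end{itemize}
For $R=\WW(\theta)$ we have $(Rx)_i=x_i$ for $i<n$ and $(Rx)_n=x_n+\theta\cdot x'$. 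Hence
$$(Rx)^\alpha=x'^{\alpha'}\,(x_n+\theta\cdot x')^{\alpha_n}=x^\alpha+\sum c_\beta x^\beta,$$
where every $\beta$ in the sum has $|\beta|=|\alpha|$ and $\beta_n<\alpha_n$. Under $\prec$, which within a fixed degree orders first by increasing $\alpha_n$, each such $\beta$ precedes $\alpha$. So $H^{(j)}_R$ is unipotent triangular. Whether it is upper or lower is only a matter of row versus column convention; I will fix the convention so that it matches Example~\ref{example}. The determinant is then $1$.

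For (ii), expand binomially:
$$(x+t\en)^\alpha-(t\en)^\alpha=x^\alpha+\sum_{0<\beta\le\alpha,\ \beta\ne\alpha} c\, x^\beta\, t^{\alpha_n-\beta_n},$$
where the sum runs over $\beta$ with $\beta'=\alpha'$, $\beta_n<\alpha_n$ and $|\beta|\ge1$. The constant term cancels by the subtraction. So the coordinate $\alpha$ of the output receives $x^\alpha$ with coefficient $1$ and otherwise only monomials of strictly lower degree. This gives block triangularity with identity diagonal blocks. Next, $\PP_0=\mathbb{I}$ is immediate. For the group law, compute
$$\PP_t\PP_s\PHI(x)=\PP_t\big[\PHI(x+s\en)-\PHI(s\en)\big].$$
The delicate point is that $\PP_t$ is a matrix, so we can apply it to each vector separately. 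This gives $\PP_t\PHI(x+s\en)-\PP_t\PHI(s\en)$, and each term is evaluated by the defining identity at the points $x+s\en$ and $s\en$. The $\PHI(t\en)$ terms cancel, leaving
$$\PHI(x+(s+t)\en)-\PHI(s\en+t\en)=\PP_{t+s}\PHI(x).$$
Uniqueness then gives $\PP_t\PP_s=\PP_{t+s}$.

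For (iii), by definition $\TT$ represents
$$x\mapsto \PHI(\EE(x+t\en))-\PHI(\EE t\en).$$
The key observation is that $\EE\en=\en$, since the last column of the shear is $\en$. So this map equals
$$\PHI(\EE x+t\en)-\PHI(t\en)=\PP_t\PHI(\EE x)=\PP_t\HH_{\EE}\PHI(x).$$
On the other hand, $\HH_{\EE}\PP_t\PHI(x)=\PHI(\EE x+\EE t\en)-\PHI(\EE t\en)$, which is the same expression, again because $\EE\en=\en$. Uniqueness gives both factorizations, and $\det\TT=\det\PP_t\det\HH_{\EE}=1\cdot1$ by (i) and (ii).

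The only real obstacle is bookkeeping: making the ordering $\prec$ and the row/column convention consistent, so that "lower triangular" holds literally. I would fix this by checking the convention against Example~\ref{example} before writing out the general argument.
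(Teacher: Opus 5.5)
Your proof is correct and follows essentially the paper's argument. You use homogeneity and the expansion of $(x')^{\alpha'}(x_n+\theta\cdot x')^{\alpha_n}$ for (i), the binomial expansion plus the identity-level computation of $\PP_t\PP_s\PHI(x)$ for (ii), which the paper only calls ``similar to (i)'', and $\EE\en=\en$ for the commutativity in (iii). One small point: there is no row/column convention to fix in (i). The defining identity $\HH_R\PHI(x)=\PHI(Rx)$ makes row $\alpha$ record the coefficients of $(Rx)^\alpha$, so your observation that only monomials $x^\beta$ with $\beta\preceq\alpha$ occur already shows that $H^{(j)}_R$ is lower triangular.
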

    \begin{proof}
    The block structure in (i) follows directly from the the homogeneity of $\PHI$ in each degree. For the case $R= \WW(\theta)$, write $x' = (x_1,\cdots, x_{n-1})$ and $\alpha' = (\alpha_1,\cdots, \alpha_{n-1})$. Then
    \[
    \left( \WW(\theta) x \right)^\alpha = \left( x'\right)^{\alpha'} \left( x_n + \theta \cdot x' \right)^{\alpha_n}.
    \]
    Each monomial $x^\beta$ in this expansion has $|\beta| = |\alpha|$ and $\beta_n \le \alpha_n$. Equality occurs only when $\beta_n = \alpha_n$, and that term has coefficient $1$.  
    Thus, under the order ``$\prec$", each $H_R^{(j)} \in $ $\mathrm{GL}\binom{n+j-1}{j}$ is lower triangular with all diagonal elements equal to $1$, so $\det H_R^{(j)}=1$. The composition law follows from
    \[
    \HH_{R} \HH_T \PHI (x) = \HH_R \PHI(Tx) = \PHI(RT x) = \HH_{R\,T} \PHI (x), \quad  x\in \R^n. 
    \]

    The proof of (ii) is similar to that of (i). 
    Now we prove (iii). The decomposition $\TT = \HH_{\EE} \PP_t$ follows from Definition \ref{defT}. Since $\EE \en = \en$, we know
    \begin{align*}
         \HH_{\EE} \PP_t \PHI(x) &= \HH_{\EE} \big( \PHI(x+t\en) -\PHI(t\en) \big)\\
         &= \PHI\big( \EE x + t\EE \en \big) -\PHI\big( t\EE \en \big) = \PP_t \, \HH_{\EE} \PHI(x).
    \end{align*}
    This proves the commutativity.
    \end{proof}

\section{Volume estimates}\label{sec-vol}
   We prove an estimate (Theorem \ref{volest} below) for the volume of intersection of the covering boxes. Such estimate yields almost orthogonality of the form \eqref{orth}, which are essential for the $L^2$ estimate of $\eta_\varepsilon$, see \eqref{apply}. The shear transform makes this property intuitive, as illustrated by the figure in Section \ref{secint}.

   Let
   \[
   \mathcal{Q} = \left(-\tfrac{1}{2}, \tfrac{1}{2} \right)^{N_{n,k}} \subset \R^{N_{n,k}}
   \]
   be the unit cube.

	\begin{definition}[Covering Box]
		Let $\varepsilon>0$, $\m=(m_1,\ldots,m_{n-1})\in \mathbb{N}^{n-1}$, $t\in \R$ and $\varepsilon\m\in[0,1]^{n-1}$. 
        The covering box $\mathcal{B}_{\varepsilon,\boldsymbol{m},t}$ is $ \mathrm{supp}(\kappa_{\varepsilon,\varepsilon\boldsymbol{m},t})$, namely,
		\begin{equation*}
			\mathcal{B}_{\varepsilon,\m,t} = \left\{ \xi \in \mathbb{R}^{N_{n,k}} : \De^{-1} \mathbb{T}_{\varepsilon\boldsymbol{m},t}^{-1} (\xi - \boldsymbol{\Phi}_{\varepsilon\boldsymbol{m}}(t \en))\in \mathrm{supp}\,\varphi \right\}.
		\end{equation*}
        	\end{definition}
            By the property of supp$\,\varphi$, there are two constants $c_1,c_2>0$ such that
		\begin{align}\label{app}
        \begin{split}
            \boldsymbol{\Phi}_{\varepsilon\m}(t \en)+ c_1 \mathbb{T}_{\varepsilon \m ,t} \De \mathcal{Q} &\subset   \mathcal{B}_{\varepsilon,\m,t}\\
            &\subset  \boldsymbol{\Phi}_{\varepsilon\m}(t \en)+ c_2 \mathbb{T}_{\varepsilon \m ,t} \De \mathcal{Q}.
        \end{split}
		\end{align}
    Set
    \begin{equation*}
         A_{n,k} := \binom{n+k}{n+1}.
    \end{equation*}
    In what follows, for two nonnegative functions $f$ and $g$, we write $f \lesssim_{n,k} g$ if there is a constant $c_{n,k}$ only depending on $n$ and $k$ such that $f \le c_{n,k} g$. And $f \sim_{n,k} g$ means $f \lesssim_{n,k} g$ and $g \lesssim_{n,k} f$. We use $|F|$ to denote the volume of a measurable set $F$.
	\begin{theorem}\label{volest}
		Let $\varepsilon>0$, $n\ge 2$, $k\ge 2$, and $\boldsymbol{m},\boldsymbol{m}'\in\mathbb{N}^{n-1}$ satisfy $\varepsilon\boldsymbol{m},\varepsilon\boldsymbol{m}'\in [0,1]^{n-1}$, and $t,t'\in \R$. 
        Then
		\begin{equation}\label{B}
\left|
\mathcal{B}_{\varepsilon,\boldsymbol{m}',t'}
\cap 
\mathcal{B}_{\varepsilon,\m,t} 
\right|
\lesssim_{n,k}
\frac{ \varepsilon^{A_{n,k}} }{ \big( 1 + |\boldsymbol{m}'-\m | \big)^{ n+1 } }.
		\end{equation}
	\end{theorem}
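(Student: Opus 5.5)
The plan is to reduce \eqref{B} to a statement about a single unit cube intersected with a sheared and translated copy of itself, and then to extract all of the decay from the degree-$2$ block.

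\textbf{Normalization.} By \eqref{app}, it suffices to bound
\[
\big|(\boldsymbol{\Phi}_{\varepsilon\m'}(t'\en)+c_2\mathbb{T}_{\varepsilon\m',t'}\De\Q)\cap(\boldsymbol{\Phi}_{\varepsilon\m}(t\en)+c_2\mathbb{T}_{\varepsilon\m,t}\De\Q)\big| .
\]
Apply the affine map $\xi\mapsto \De^{-1}\mathbb{T}^{-1}_{\varepsilon\m,t}(\xi-\boldsymbol{\Phi}_{\varepsilon\m}(t\en))$. By Proposition \ref{prop-HP}(iii) we have $\det\TT=1$, and $\det\De=\varepsilon^{\sum_{1\le|\alpha|\le k}\alpha_n}$. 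A short counting argument gives $\sum_{1\le|\alpha|\le k}\alpha_n=A_{n,k}$. So the Jacobian contributes exactly $\varepsilon^{A_{n,k}}$, and it remains to show
\[
|c_2\Q\cap (M c_2\Q+v)|\lesssim (1+|\boldsymbol d|)^{-(n+1)},\qquad \boldsymbol d=\m'-\m,
\]
uniformly in $v$, where $M=\De^{-1}\mathbb{T}^{-1}_{\varepsilon\m,t}\mathbb{T}_{\varepsilon\m',t'}\De$.

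\textbf{Simplifying $M$.} Using Proposition \ref{prop-HP} and $\WW(\theta)^{-1}=\WW(-\theta)$, we get $\mathbb{T}^{-1}_{\varepsilon\m,t}\mathbb{T}_{\varepsilon\m',t'}=\PP_{-t}\HH_{\WW(\varepsilon\boldsymbol d)}\PP_{t'}$. Conjugating by $\De=\HH_{D_\varepsilon}$ and using the composition law gives $\De^{-1}\HH_{\WW(\varepsilon\boldsymbol d)}\De=\HH_{D_\varepsilon^{-1}\WW(\varepsilon\boldsymbol d)D_\varepsilon}=\HH_{\WW(\boldsymbol d)}$. Similarly $\De^{-1}\PP_s\De=\PP_{s/\varepsilon}$, which can be checked directly from the definition via $D_\varepsilon^{-1}(x+s\en)$. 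Hence
\[
M=\PP_{-t/\varepsilon}\,\HH_{\WW(\boldsymbol d)}\,\PP_{t'/\varepsilon}.
\]
Since the $\PP$'s only feed higher-degree coefficients into lower-degree ones and have identity diagonal blocks, the same holds for $M^{-1}$. In block form, for $z=(z_1,\dots,z_k)$ with $z_j$ the degree-$j$ coordinates, the degree-$2$ component of $M^{-1}(z-v)$ equals
\[
H^{(2)}_{\WW(-\boldsymbol d)}z_2+L(z_3,\dots,z_k)+w
\]
for some linear map $L$ and vector $w$; in particular it does not depend on $z_1$.

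\textbf{Fubini and the degree-$2$ block.} I would then integrate with Fubini: fix $z_1$ and $z_{\ge3}$ in bounded sets, which cost $O(1)$ measure, and estimate the $z_2$-slice. That slice is contained in
\[
\{z_2\in c_2\Q_2:\ H^{(2)}_{\WW(-\boldsymbol d)}z_2+w'\in c_2\Q_2\},
\]
where $\Q_2$ is the unit cube of dimension $\binom{n+1}{2}$. View $z_2$ as a quadratic form $q(x)$. Then the map is $q\mapsto q(x',x_n-\boldsymbol d\cdot x')$, and one needs its coefficients to stay bounded. Choose $i\le n-1$ with $|d_i|\ge|\boldsymbol d|/(n-1)$. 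Write $c$ for the $x_n^2$ coefficient and $b_j$ for the $x_jx_n$ coefficients, $j\le n-1$. The new $x_i^2$ coefficient contains $c\,d_i^2$ plus terms $-b_id_i$ and the old $x_i^2$ coefficient. For $j\ne i$, the new $x_ix_j$ coefficient contains $-b_jd_i-b_id_j+2c\,d_id_j$ plus a bounded term. The new $x_ix_n$ coefficient is $b_i-2c\,d_i$.

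I would peel these constraints in a triangular order. First, $|c|\lesssim |\boldsymbol d|^{-2}$, obtained by combining the $x_i^2$ and $x_ix_n$ constraints. Then $|b_i|\lesssim|\boldsymbol d|^{-1}$. Then $|b_j|\lesssim|\boldsymbol d|^{-1}$ for each $j\ne i$, using the $x_ix_j$ constraint together with the bounds already obtained. This confines $(c,b_1,\dots,b_{n-1})$ to a set of measure $\lesssim|\boldsymbol d|^{-2}\cdot|\boldsymbol d|^{-(n-1)}=|\boldsymbol d|^{-(n+1)}$, while the remaining degree-$2$ coefficients stay bounded. Combined with the trivial bound when $|\boldsymbol d|\lesssim1$, this gives $(1+|\boldsymbol d|)^{-(n+1)}$, and hence \eqref{B}.

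\textbf{Main obstacle.} I expect the hardest step to be this linear-algebra peeling in the degree-$2$ block. The constraints couple $c$, $b_i$ and $b_j$, so at each step I must isolate a single unknown multiplied by a factor $\gtrsim|\boldsymbol d|$ (or $|\boldsymbol d|^2$ for $c$) while all other terms are already known to be $O(1)$, uniformly in the shift $w'$. If the direct elimination becomes messy, I would first apply a rotation in the $x'$ variables to align $\boldsymbol d$ with $\boldsymbol{e_1}$. Its $\HH$-matrix and the inverse are bounded, so it distorts cubes only up to constants depending on $n,k$, and the computation reduces to the case $\boldsymbol d=|\boldsymbol d|\boldsymbol{e_1}$, where the triangular structure is explicit.
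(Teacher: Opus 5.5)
There is a genuine gap: the degree-$2$ block alone yields only $|\boldsymbol d|^{-n}$, not $|\boldsymbol d|^{-(n+1)}$. The rest of your normalization is correct, namely the Jacobian $\varepsilon^{A_{n,k}}$, $M=\PP_{-t/\varepsilon}\HH_{\WW(\boldsymbol d)}\PP_{t'/\varepsilon}$, and carrying the shift $v$ instead of using Brunn--Minkowski.

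\textbf{Where the peeling fails.} The $x_i^2$ and $x_ix_n$ constraints are $a_{ii}-b_id_i+c\,d_i^2=O(1)$ and $b_i-2c\,d_i=O(1)$. These are two linear conditions on the single pair $(c,b_i)$, with determinant $2d_i^2-d_i^2=d_i^2$. So they confine $(c,b_i)$ to a parallelogram of area $\sim d_i^{-2}$, and no better.
\begin{itemize}
\item Eliminating $b_i$ gives only $c\,d_i^2=O(|d_i|)$, so $c$ is confined to an interval of length $O(|d_i|^{-1})$.
\item For $\boldsymbol d=D\boldsymbol{e_1}$, the choice $b_1\approx-\beta$, $c\approx-\beta/D$ is admissible for every $|\beta|\le\frac14$.
\end{itemize}
So neither $|c|\lesssim|\boldsymbol d|^{-2}$ nor $|b_i|\lesssim|\boldsymbol d|^{-1}$ holds; you have extracted three powers of $|\boldsymbol d|$ from two constraints. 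Adding the $n-2$ factors from the $x_ix_j$ constraints, your scheme gives $|\boldsymbol d|^{-n}$. This is sharp for the slice: for $n=2$, $\boldsymbol d=D$ and zero shift, the set of $z_2\in(-\frac12,\frac12)^3$ with $H^{(2)}_{\WW(-D)}z_2\in(-\frac12,\frac12)^3$ has measure $\sim D^{-2}$. Also, $q\mapsto q(x',x_n-\boldsymbol d\cdot x')$ is the transpose of $H^{(2)}_{\WW(-\boldsymbol d)}$. With $R=\WW(-\boldsymbol d)$, the correct matrix acts on points by $S\mapsto RSR^{T}$ in symmetric-matrix coordinates, but the count is the same either way.

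\textbf{Where the missing factor is.} The missing $|\boldsymbol d|^{-1}$ sits in the degree-$1$ block, which you discard by letting $z_1$ range over a set of measure $O(1)$. Relatedly, you have the triangularity backwards. $\PP_s$ and $\HH_R$ are block lower triangular in the sense that the degree-$j$ output depends only on inputs of degree $\le j$; the ``higher into lower'' picture describes the transpose acting on polynomial coefficients. Consequently:
\begin{itemize}
\item The degree-$1$ part of $M^{-1}(z-v)$ is $\WW(-\boldsymbol d)z_1+w_1$, which depends on $z_1$ alone. The conditions $z_1\in c_2\Q_n$ and $\WW(-\boldsymbol d)z_1+w_1\in c_2\Q_n$ force $\boldsymbol d\cdot z'$ into an interval of length $O(1)$. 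Hence $z_1$ lies in a set of measure $\lesssim|\boldsymbol d|^{-1}$.
\item The degree-$2$ part is $H^{(2)}_{\WW(-\boldsymbol d)}z_2$ plus an affine function of $z_1$. So for each such $z_1$ and any $z_{\ge3}$, the $z_2$-slice has measure $\lesssim|\boldsymbol d|^{-n}$, uniformly in the shift.
\end{itemize}
Running Fubini in this order gives $|\boldsymbol d|^{-(n+1)}$.

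This is exactly what the paper's minor $\mathbb{J}$ of $\mathbb{T}_{|\m|\boldsymbol{e_1},t}$ encodes. It takes rows $x_n,x_2x_n,\dots,x_{n-1}x_n,x_n^2$ against columns $x_1,x_1^2,x_1x_2,\dots,x_1x_{n-1}$. The contributions are one factor $|\m|$ from degree $1$ ($x_n$ against $x_1$), $|\m|^2$ from $x_n^2$ against $x_1^2$, and $|\m|$ from each $x_jx_n$ against $x_1x_j$. This gives $|\det\mathbb{J}|=|\m|^{n+1}$, and Fubini over the complementary coordinates finishes the proof.
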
\begin{rem}
   The exponent $n+1$ in \eqref{B} is sufficient for our proof of Theorem \ref{thm:main} (we will see from Section \ref{4} that any exponent larger than $n$ is enough), we do not pursue its sharpness here. A possible optimal exponent is
   $$B_{n,k} = \sum_{j=0}^{\lfloor \frac{k-1}{2} \rfloor} \binom{n+k-1-2j}{n}.$$
\end{rem}

\subsection{Proof of Theorem \ref{volest}}

By \eqref{app}, it suffices to prove
    \[
    \left| \big( a+ \mathbb{T}_{\varepsilon \m ,t} \De \mathcal{Q} \big) \cap \big( b+ \mathbb{T}_{\varepsilon \m' ,t'} \De \mathcal{Q}\big) \right| \lesssim_{n,k}
\frac{ \varepsilon^{A_{n,k}} }{ \left( 1 + |\boldsymbol{m}'-\m | \right)^{ n+1 } }, 
    \]
 uniformly in $ a,b\in \R^{N_{n,k}}.$

Since $\TT \De \mathcal{Q}$ is a symmetry convex body, the translation does not cause growth of the volume. In other words, 
 \[
     \left| \big( a+ \mathbb{T}_{\varepsilon \m ,t} \De \mathcal{Q} \big) \cap \big( b+ \mathbb{T}_{\varepsilon \m' ,t'} \De \mathcal{Q}\big) \right|
     \le
     \left|  \mathbb{T}_{\varepsilon \m ,t} \De \mathcal{Q}  \cap  \mathbb{T}_{\varepsilon \m' ,t'} \De \mathcal{Q} \right|.
     \]
 It can be proved by the well-known Brunn-Minkowski inequality, see for instance \cite[pp. 376-377]{Gar02}. 
 
    Consequently, it remains to show that
    \begin{equation}\label{TDQ}
         \left|  \mathbb{T}_{\varepsilon \m ,t} \De \mathcal{Q}  \cap  \mathbb{T}_{\varepsilon \m' ,t'} \De \mathcal{Q} \right|
         \lesssim_{n,k}
         \frac{ \varepsilon^{A_{n,k}} }{ \left( 1 + |\boldsymbol{m}'-\m | \right)^{ n+1 } }.
    \end{equation}
    Since $\det \left(\mathbb{T}_{\varepsilon \m,t} \De \right) = \varepsilon^{A_{n,k}}$, a change of variable gives
    \begin{align*}
         \left|  \mathbb{T}_{\varepsilon \m ,t} \De \mathcal{Q}  \cap  \mathbb{T}_{\varepsilon \m' ,t'} \De \mathcal{Q} \right| = \varepsilon^{A_{n,k}} \left|   \mathcal{Q}  \cap  \mathbb{A} \mathcal{Q} \right|,
    \end{align*}
    where
    \begin{equation*}
        \mathbb{A}  := \De^{-1} \mathbb{T}_{\varepsilon \m,t}^{-1} \mathbb{T}_{\varepsilon \m',t'} \De.
    \end{equation*}
    Recall that
    \[
    \De = \HH_{D_\varepsilon} \quad \mbox{and} \quad D_{\varepsilon^{-1}} \EE D_{\varepsilon} = \WW(\varepsilon^{-1} \theta).
    \]
    Proposition \ref{prop-HP} gives
    \begin{align*}
        \mathbb{A} = \HH_{D_{\varepsilon^{-1}}} \HH_{\WW(\varepsilon (\m'-\m))} \PP_{t'-t} \HH_{D_\varepsilon} &= \HH_{\WW(\m'-\m)} \left( \HH_{D_{\varepsilon^{-1}}} \PP_{t'-t} \HH_{D_\varepsilon} \right)\\
        & = \HH_{\WW(\m'-\m)} \PP_{\frac{t'-t}{\varepsilon}} = \mathbb{T}_{\m'-\m, \frac{t'-t}{\varepsilon}}.
    \end{align*}
 Theorem \ref{volest} is thus reduced to the following proposition.
    \begin{proposition}\label{lem-T}
        It holds uniformly in $\m \in \Z^{n-1}$ and $t \in \R$ that 
    \begin{equation}\label{Tmt}
        |\mathcal{Q} \cap \mathbb{T}_{\m, t} \, \mathcal{Q}| \lesssim_{n,k} \left( 1 + |\m | \right)^{ -n-1 }.
    \end{equation}
      \end{proposition}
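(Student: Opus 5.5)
Since $\mathbb{T}_{\m,t}$ has determinant one and $\mathcal{Q}$ is a box, I plan to estimate $|\mathcal{Q}\cap\mathbb{T}_{\m,t}\mathcal{Q}|$ by exhibiting enough linear functionals that are forced to be small on the intersection. Concretely, $\xi\in\mathcal{Q}\cap\mathbb{T}_{\m,t}\mathcal{Q}$ means $|\xi_\alpha|<1/2$ and $|(\mathbb{T}_{\m,t}^{-1}\xi)_\alpha|<1/2$ for all $\alpha$. By Proposition \ref{prop-HP}, $\mathbb{T}_{\m,t}^{-1}=\mathbb{T}_{-\m,-t}$, which is block lower triangular with unipotent diagonal blocks $H^{(j)}_{\WW(-\m)}$. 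The first step is to strip off the translation $\PP_{-t}$. I will use that the constraint in the top degree $k$ block reads $H^{(k)}_{\WW(-\m)}\xi^{(k)}+(\text{lower-degree terms in }\xi,t)\in\frac12(-1,1)^{\binom{n+k-1}{k}}$. Integrating degree by degree from the top down (Fubini), it suffices, for each fixed lower-degree part, to bound the measure of $\{\xi^{(j)}\in \frac12(-1,1)^{d_j} : H^{(j)}_{\WW(-\m)}\xi^{(j)}\in c+\frac12(-1,1)^{d_j}\}$ uniformly in the shift $c$. Here the shift $c$ absorbs all the $t$-dependence. By the Brunn–Minkowski remark already used above, the shift can be dropped, so the problem reduces to bounding $|Q_j\cap H^{(j)}_{\WW(\m)}Q_j|$ for the unit cube $Q_j$ in the degree-$j$ block. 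This reduction makes the bound independent of $t$.

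The main step is then to show, for some single degree $j$ (I expect $j=2$ already suffices when $n\ge2$, possibly combined with $j=1$), that $|Q_j\cap H^{(j)}_{\WW(\m)}Q_j|\lesssim(1+|\m|)^{-a_j}$ with $\sum_j a_j\ge n+1$. All other blocks are trivially bounded by $1$. For $j=1$, $H^{(1)}=\WW(\m)$ maps $x\mapsto(x',x_n+\m\cdot x')$. The intersection lies in $\{|x'|\le1,\ |\m\cdot x'|\le 1\}$, a slab of width $\sim|\m|^{-1}$, which gives volume $\lesssim(1+|\m|)^{-1}$. For $j=2$ I will use the quadratic form picture. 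The coordinates $\xi^{(2)}$ correspond to a symmetric matrix $S$ (coefficients of $x^\alpha$, $|\alpha|=2$), and $H^{(2)}_{\WW(\m)}$ acts by a congruence $S\mapsto \WW^{T}S\WW$ (or its inverse). Writing $S$ in block form with $a=S_{nn}$, $b=S_{n,\cdot}\in\R^{n-1}$, $S'$, the transformed entries are $a$, $b+a\m$, $S'+\m b^T+b\m^T+a\m\m^T$. Requiring both $S$ and its image to be bounded forces $|a|\lesssim|\m|^{-2}$ through the $\m\m^T$ term in the direction $\hat\m\hat\m^T$. It also forces the components of $b$ along $\hat\m$ to satisfy $|b\cdot\hat\m|\lesssim|\m|^{-1}$, and additionally each orthogonal component $b\cdot v$ with $v\perp\m$ to be small, via the off-diagonal entries $\hat\m^T(\cdot)v$ of the image, which contain $|\m|(b\cdot v)$. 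This gives $n-1$ further factors $|\m|^{-1}$ beyond the $|\m|^{-2}$ from $a$, so that degree $2$ alone yields $(1+|\m|)^{-(n+1)}$. I must check that these constraints are genuinely independent after accounting for $a\m$ in $b+a\m$; since $|a\m|\lesssim|\m|^{-1}$ this only perturbs the $b$-constraints harmlessly.

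The delicate point, which I expect to be the main obstacle, is making the $j=2$ counting rigorous uniformly in $\m$ rather than only for $\m$ along a coordinate axis. Since the cube is not rotation invariant, I will not rotate $\m$ to an axis. Instead I will change variables linearly in $(a,b)$ only, using an orthonormal frame $\{\hat\m,v_1,\dots,v_{n-2}\}$ of $\R^{n-1}$, and pass from the cube $Q_2$ to a comparable ball, losing only constants depending on $n$. The set is then contained in $\{|a|\lesssim|\m|^{-2},\ |b\cdot\hat\m|\lesssim|\m|^{-1},\ |b\cdot v_i|\lesssim|\m|^{-1},\ |S'|\lesssim1\}$, whose volume is $\lesssim|\m|^{-2-(n-1)}$. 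For $|\m|\lesssim1$ the trivial bound $1$ is used. Combining with the Fubini reduction gives \eqref{Tmt}.
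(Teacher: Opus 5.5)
You have a genuine gap in the degree-two count, although your reduction is sound and is a different route from the paper's. Since $\mathbb{T}_{\m,t}^{-1}=\mathbb{T}_{-\m,-t}$ is block lower triangular with diagonal blocks $H^{(j)}_{\WW(-\m)}$, integrating from the top degree down and discarding the shifts via the symmetric-convex-body inequality gives $|\Q\cap\mathbb{T}_{\m,t}\Q|\le\prod_{j=1}^{k}|Q_j\cap H^{(j)}_{\WW(\m)}Q_j|$ with no $t$-dependence. Your degree-one bound $\lesssim(1+|\m|)^{-1}$ is also right.

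The problem is the degree-two block, on which your conclusion rests. The containment of the intersection in $\{|a|\lesssim|\m|^{-2},\ |b\cdot\hat{\m}|\lesssim|\m|^{-1},\dots\}$ is false, and the argument for it is circular. You get $|b\cdot\hat{\m}|\lesssim|\m|^{-1}$ from the bound on $b+a\m$ only if you already know $|a|\lesssim|\m|^{-2}$. You get $|a|\lesssim|\m|^{-2}$ from the bound on $\hat{\m}^{T}S'\hat{\m}+2|\m|\,b\cdot\hat{\m}+|\m|^{2}a$ only if you already know $|b\cdot\hat{\m}|\lesssim|\m|^{-1}$. In your own parametrization, take $a=c/|\m|$, $b=-\tfrac{c}{2}\hat{\m}$, $S'=0$ with $|c|\le\tfrac12$. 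This point is sent to $a$, $b+a\m=\tfrac{c}{2}\hat{\m}$, $S'+\m b^{T}+b\m^{T}+a\m\m^{T}=0$. So it lies in the intersection although $|a|\sim|\m|^{-1}$ and $|b\cdot\hat{\m}|\sim 1$. The triple $(a,\,b\cdot\hat{\m},\,\hat{\m}^{T}S'\hat{\m})$ is only jointly constrained and contributes $|\m|^{-2}$, not $|\m|^{-3}$. Hence the degree-two factor is $\sim|\m|^{-n}$, not $|\m|^{-(n+1)}$. For instance, when $n=2$ and $m\ge1$, the block $H^{(2)}_{\WW(m)}$ has rows $(1,0,0)$, $(m,1,0)$, $(m^{2},2m,1)$. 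The set $\{z\in Q_3: H^{(2)}_{\WW(m)}z\in Q_3\}$ contains every $z=(2u/m+s,\,-u,\,z_3)$ with $|u|,|z_3|\le\tfrac1{10}$ and $|s|\le\tfrac{1}{10m^{2}}$, which has volume $\gtrsim m^{-2}$. So degree two alone cannot give $(1+|\m|)^{-(n+1)}$.

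The repair is the option you mentioned in passing: use both $j=1$ and $j=2$. Your $n-2$ factors $|\m|^{-1}$ from $b\cdot v_i$ are correct. For the triple, fix $b\cdot\hat{\m}$ and $\hat{\m}^{T}S'\hat{\m}$; then $a$ ranges over an interval of length $\lesssim|\m|^{-2}$. This gives $a_2=n$, and with $a_1=1$ your product bound yields $(1+|\m|)^{-(n+1)}$.

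This $1+n$ split is exactly what the paper's minor encodes. After rotating to $\m=|\m|\boldsymbol{e_1}$, the paper takes rows $(x_n,x_2x_n,\dots,x_{n-1}x_n,x_n^2)$ and columns $(x_1,x_1^2,x_1x_2,\dots,x_1x_{n-1})$. The determinant is $\pm|\m|\cdot|\m|^{n}$, since the $t$-dependent entry $2t|\m|$ lies below the block diagonal. The paper then bounds each $x_{\mathcal J}$-fiber by $|\det\mathbb{J}|^{-1}$. The paper's route is a single determinant computation. Yours separates the degrees, removes $t$ more transparently, and could be pushed toward larger exponents using higher blocks.

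One minor correction: in the paper's coordinates, $H^{(2)}_R$ acts on $\xi^{(2)}$, viewed as the symmetric matrix $X$ with $X_{ij}=\xi_{\boldsymbol{e_i}+\boldsymbol{e_j}}$, by $X\mapsto RXR^{T}$. The congruence $S\mapsto R^{T}SR$ you wrote is the dual action on coefficient vectors. The orders of magnitude are the same for either action, so this does not change the count.
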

      \begin{proof}
        The case $\m = 0$ is easy. Now we assume $\m \ne 0$.  Choose a rotation $\mathbb{M} \in O(n-1)$ such that $\frac{\m}{|\m|} = \mathbb{M} \boldsymbol{e_1}$. By Proposition \ref{prop-HP}, we have
          \begin{equation*}
              \mathbb{T}_{\m,t} = \HH_{\widetilde{\mathbb{M}}} \, \mathbb{T}_{|\m| \boldsymbol{e_1},t} \HH_{\widetilde{\mathbb{M}}^T}, \quad \mbox{where} \ \  \widetilde{\mathbb{M}} = 
              \begin{pmatrix}
                \mathbb{M} & 0 \\
                0 & 1 
              \end{pmatrix}.
          \end{equation*}
          Note that $c_1 \mathcal{Q} \subset \HH_{\widetilde{\mathbb{M}}^T} \mathcal{Q} \subset  c_2 \mathcal{Q}$. Thus, it suffices to prove \eqref{Tmt} in the case $\m = |\m| \boldsymbol{e_1}$. 
          
          We claim that $\mathbb{T}_{|\m| \boldsymbol{e_1},t} $ has a minor $\mathbb{J}$ with $|\det \mathbb{J}| = |\m|^{n+1}.$ Under the order ``$\prec$" (see Section \ref{2.1}), we can identify the column and row indices of $\TT$ with $\{x^\alpha : 1\le |\alpha| \le k  \}$. Choose
          \begin{align*}
          \begin{split}
                \mathcal{I} &= (x_n, x_2 x_n, \cdots, x_{n-1}x_n, x_n^2),\\
                \mathcal{J} &= (x_1, x_1^2, x_1 x_2, \cdots, x_1 x_{n-1}).
          \end{split}
          \end{align*}
          Let $\mathbb{J}$ be the minor with rows indexed by $\mathcal{I}$ and columns indexed by $ \mathcal{J}$. A direct computation gives
          \[
          \mathbb{J} = |\m|\begin{pmatrix}
                1 & 0 & 0 \\
                0 & 0  &  \mathbb{I}_{n-2} \\
               2t  & |\m|  & 0
              \end{pmatrix}.
          \]
          It is clear that $|\det \mathbb{J}| = |\m|^{n+1}$. 
A change of variables gives
\[
  |\Q\cap \mathbb T_{|\m|\boldsymbol e_1,t}\Q|
  =\int_\Q\mathbf1_\Q(\mathbb T_{|\m|\boldsymbol e_1,t} \, x)\,dx
  =|\{x\in\Q: \mathbb T_{|\m|\boldsymbol e_1,t} \, x\in\Q\}|.
\]
Let $\mathcal K$ be the complement of $\mathcal J$ in the
index set, write
$x=(x_{\mathcal J},x_{\mathcal K})$ and
$\Q=\Q_{\mathcal J}\times\Q_{\mathcal K}$.
There exists A minor $\mathbb{K}$ such that the rows indexed by $\mathcal I$ are
\[
  (\mathbb T_{|\m|\boldsymbol e_1,t} \, x)_{\mathcal I}
  =\mathbb J x_{\mathcal J} + \mathbb{K} x_{\mathcal K}.
\]
For each fixed $x_{\mathcal K}\in\Q_{\mathcal K}$, the fiber
\begin{align*}
     W_{x_{\mathcal{K}}} &:= \left\{ x_{\mathcal{J}}:(x_\mathcal{J},x_{\mathcal{K}}) \in \mathcal{Q} \cap \mathbb{T}_{|\m| \boldsymbol{e_1},t} \mathcal{Q} \right\}  \subset  \left\{ x_{\mathcal{J}}: \mathbb{B} x_{\mathcal{J}} + \mathbb{K} x_{\mathcal{K}} \in \mathcal{Q}_{\mathcal{J}}\right\} =: F_{x_{\mathcal{K}}}.
\end{align*}
Since $|\Q_{\mathcal I}|=1$ and $|F_{x_{\mathcal K}}|
  =|\det\mathbb J|^{-1}=|\m|^{-n-1}$.
Fubini's theorem now gives
\begin{align*}
  |\Q\cap \mathbb T_{|\m|\boldsymbol e_1,t} \Q|
  =\int_{\Q_{\mathcal K}}|W_{x_{\mathcal K}}|\,dx_{\mathcal K}\leq\int_{\Q_{\mathcal K}}|F_{x_{\mathcal K}}|\,dx_{\mathcal K}
   =|\m|^{-n-1}.
\end{align*}
Combining this with $|\Q\cap \mathbb T_{|\m|\boldsymbol e_1,t} \Q|\leq1$ proves \eqref{Tmt}.
\end{proof}

\begin{rem}
There is another way to deal with the volume estimates. 
Taylor expansion gives
$$\mathbb{T}^{\boldsymbol{\alpha}}_{
  \varepsilon\mathbf{m}
  +(\varepsilon\mathbf{m}'-\varepsilon\mathbf{m}),\,t+(t'-t)}
=
\sum_{1\leq|\boldsymbol{\beta}|\leq k}
a_{\boldsymbol{\beta}}
\bigl( \varepsilon (\mathbf{m}'-\mathbf{m}), \,t'-t\bigr)
\mathbb{T}^{\boldsymbol{\beta}}_{\varepsilon\mathbf{m},t}.$$
We can therefore get $\mathbb{T}_{\varepsilon \m,t}^{-1} \mathbb{T}_{\varepsilon \m',t'}$ by computing $a_{\boldsymbol{\beta}}$. These computation can be simplified using tensor differentiation, where we use the observation that from $(n+1)$-th to $(n+\frac{n(n+1)}{2})$-th coordinates of $\mathbb{T}_{\varepsilon\boldsymbol{m},t}^{\boldsymbol{e}_i+\boldsymbol{e}_j}$ are the coordinates of $\WW_i\otimes\WW_j+\WW_j\otimes\WW_i$ (and of $\WW_j\otimes\WW_j$, if $i=j$) with respect to the natural basis of $\mathrm{Sym}^2(\mathbb{R}^n)$. 
\end{rem}

\subsection{The amount of intersecting boxes}\label{secint}
    Except the volume, we need to estimate the intersection number 
\begin{equation*}
    \left|\{\iota'\in\mathbb{Z}\ |\ \mathcal{B}_{\varepsilon,\m',\varepsilon\iota'}\cap\mathcal{B}_{\varepsilon,\m,\varepsilon\iota} \ne \emptyset \}\right|, \quad \forall \, \iota\in\mathbb{Z},
\end{equation*}
which will also be used in the $L^2$ estimate of $\eta_{\varepsilon}$, see \eqref{apply}.

\begin{lemma}\label{iota}
    Let $\iota\in\mathbb{Z}$ and $\boldsymbol{m},\boldsymbol{m}'\in\mathbb{N}^{n-1}$, we have 
\begin{equation*}\label{intersec}
         \left|\{\iota'\in\mathbb{Z}\ |\ \mathcal{B}_{\varepsilon,\m',\varepsilon\iota'}\cap\mathcal{B}_{\varepsilon,\m,\varepsilon\iota} \ne \emptyset \} \right|
         \lesssim_{n,k} \, 1+ |\boldsymbol{m}-\boldsymbol{m}'|.
    \end{equation*}
\end{lemma}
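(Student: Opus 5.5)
Since each $\mathcal{B}_{\varepsilon,\m,t}$ lies between $\PHI_{\varepsilon\m}(t\en)+c_1\mathbb T_{\varepsilon\m,t}\De\mathcal Q$ and the same set with $c_2$ by \eqref{app}, it suffices to count the integers $\iota'$ for which the enlarged boxes $\PHI_{\varepsilon\m'}(\varepsilon\iota'\en)+c_2\mathbb T_{\varepsilon\m',\varepsilon\iota'}\De\mathcal Q$ and $\PHI_{\varepsilon\m}(\varepsilon\iota\en)+c_2\mathbb T_{\varepsilon\m,\varepsilon\iota}\De\mathcal Q$ meet. The plan is to look only at the first $n$ coordinates, which correspond to the degree-one monomials $x_1,\dots,x_n$, i.e.\ to project onto $\R^n$. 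By Proposition~\ref{prop-HP}, $\mathbb T_{\theta,t}$ is block lower triangular, its first diagonal block is $H^{(1)}_{\WW(\theta)}=\WW(\theta)$, and $\De$ acts on the degree-one block by $D_\varepsilon$. Hence the projection of $\mathbb T_{\theta,t}\De\mathcal Q$ to the first $n$ coordinates is $\WW(\theta)D_\varepsilon\big(-\tfrac12,\tfrac12\big)^n$. The degree-one part of $\PHI_\theta(t\en)$ is $\WW(\theta)t\en=t\en$. So a necessary condition for the two boxes to meet is that the two planar slabs
\[
S_{\iota}:=\varepsilon\iota\en+c_2\WW(\varepsilon\m)D_\varepsilon(-\tfrac12,\tfrac12)^n,\qquad S'_{\iota'}:=\varepsilon\iota'\en+c_2\WW(\varepsilon\m')D_\varepsilon(-\tfrac12,\tfrac12)^n
\]
intersect in $\R^n$.

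Each slab $S_\iota$ is the set of $y=(y',y_n)$ with $|y'|_\infty\lesssim1$ and $|y_n-\varepsilon\m\cdot y'-\varepsilon\iota|\lesssim\varepsilon$. This is because $\WW(\theta)(x',\varepsilon s)=(x',\varepsilon s+\theta\cdot x')$. If $y$ lies in both slabs, then subtracting the two defining conditions gives
\[
|\varepsilon(\iota'-\iota)+\varepsilon(\m'-\m)\cdot y'|\lesssim\varepsilon .
\]
Combined with $|y'|\lesssim_n1$, this yields $|\iota'-\iota|\lesssim_{n,k}1+|\m'-\m|$. The number of admissible integers $\iota'$ is therefore $\lesssim_{n,k}1+|\m-\m'|$, which is the claim.

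The only point needing care is checking that the degree-one block is exactly as claimed: in Definition~\ref{defT}, the linear part of $\PHI_\theta(x+t\en)-\PHI_\theta(t\en)$ in its first $n$ components is $\WW(\theta)x$, with no contribution from the higher-degree columns. This follows from the block lower-triangular structure in Proposition~\ref{prop-HP}(ii)--(iii), since $\PP_t$ has the identity as its first diagonal block and the first block row has no off-diagonal entries. Once this is verified, the rest is the elementary computation above; all constants depend only on $n,k$ through $c_2$ and the $\ell^\infty$ bound on $y'$.
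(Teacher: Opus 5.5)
Your proposal is correct and follows essentially the same route as the paper. You project onto the first $n$ (degree-one) coordinates, where each box becomes the slab $\varepsilon\iota\en+\WW(\varepsilon\m)D_\varepsilon\mathcal{Q}_n$, and then compare the last coordinate of a common point to get $|\iota-\iota'|\lesssim_{n,k}1+|\m-\m'|$; your explicit use of the $c_2$-enlarged boxes is, if anything, slightly more careful than the paper's replacement step.
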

\begin{rem}
    This lemma can be understood intuitively. It describes, when the angle between two families of covering boxes is $\varepsilon (\boldsymbol{m'}-\boldsymbol{m})$, what is the maximum number of boxes from one family that can intersect a given box from another family. We can also see this by considering the volume of the overlapping part, as illustrated in the figure below.
\end{rem}
\begin{figure}[htbp]
\centering
\scalebox{0.7}{
\begin{tikzpicture}[
  x=1cm,y=1cm,
  line cap=round,
  line join=round,
  font=\normalsize,
  strip/.style={
    line width=.7pt
  },
  diagonal/.style={
    line width=.9pt
  },
  dim/.style={
    line width=.65pt,
    decorate,
    decoration={brace,amplitude=4pt}
  }
]

\def\NumStrips{6}        
\def\StripLength{7.00}   
\def\StripWidth{0.90}   
\def\TubeLength{7.30}   
\def\TubeAngle{41.5}     
\def\StartX{0.10}
\def\StartY{0.37}
\def\ShadeIndex{3}       

\pgfmathsetmacro{\TotalHeight}{
  \NumStrips*\StripWidth
}
\pgfmathtruncatemacro{\NumHatches}{
  ceil(\StripLength/.14)
}

\coordinate (A) at (\StartX,\StartY);

\coordinate (B) at
  ($(A)+(\TubeAngle:\TubeLength)$);

\coordinate (C) at
  ($(B)+({\TubeAngle-90}:\StripWidth)$);

\coordinate (D) at
  ($(A)+({\TubeAngle-90}:\StripWidth)$);

\pgfmathsetmacro{\Ylow}{
  \ShadeIndex*\StripWidth
}
\pgfmathsetmacro{\Yhigh}{
  \Ylow+\StripWidth
}
\pgfmathsetmacro{\Xleft}{
  \StartX+(\Ylow-\StartY)/tan(\TubeAngle)
}
\pgfmathsetmacro{\Xright}{
  \Xleft+\StripWidth/sin(\TubeAngle)
}
\pgfmathsetmacro{\Xshift}{
  \StripWidth/tan(\TubeAngle)
}

\coordinate (P) at (\Xleft,\Ylow);
\coordinate (Q) at (\Xright,\Ylow);
\coordinate (R) at ({\Xright+\Xshift},\Yhigh);
\coordinate (S) at ({\Xleft+\Xshift},\Yhigh);

\begin{scope}
  \clip (P)--(Q)--(R)--(S)--cycle;

  \foreach \j in {0,...,\NumHatches}{
    \draw[line width=.35pt]
      ({.14*\j},\Ylow)
      --
      ({.14*\j},\Yhigh);
  }
\end{scope}

\foreach \j in {0,...,\NumStrips}{
  \draw[strip]
    (0,{\j*\StripWidth})
    --
    (\StripLength,{\j*\StripWidth});
}

\draw[
  strip,
  dash pattern=on 6pt off 4pt
]
  (0,0)--(0,\TotalHeight);

\draw[
  strip,
  dash pattern=on 6pt off 4pt
]
  (\StripLength,0)--(\StripLength,\TotalHeight);

\draw[diagonal]
  (A)--(B)--(C)--(D)--cycle;

\draw[dim]
  ({\StripLength+.20},\Ylow)
  --
  ({\StripLength+.20},{\Ylow-\StripWidth});

\node[anchor=west] at
  ({\StripLength+.44},{\Ylow-.5*\StripWidth})
  {$\varepsilon$};

\draw[dim]
  ($(B)+(\TubeAngle:.15)$)
  --
  ($(C)+(\TubeAngle:.15)$);

\node[anchor=west] at
  ($(B)!.5!(C)+(.48,.08)$)
  {$\varepsilon$};

\draw[
  dim,
  decoration={brace,mirror,amplitude=4pt}
]
  ($(P)+(0,-.12)$)
  --
  ($(Q)+(0,-.12)$);

\node[font=\footnotesize] at
  ({(\Xleft+\Xright)/2-.61},{\Ylow-.50})
  {$\approx |m|^{-1}$};

\draw[line width=.65pt]
  ($(Q)+(.38,0)$)
  arc[
    start angle=0,
    end angle=\TubeAngle,
    radius=.38cm
  ];

\node[anchor=west] at
  ($(Q)+(.58,.27)$)
  {$\text{angle}\approx\varepsilon m$};

\draw[
  dim,
  decoration={brace,mirror,amplitude=4pt}
]
  (0,-.58)--(\StripLength,-.58);

\node at ({\StripLength/2},-.94)
  {$1$};

\end{tikzpicture}
}

\par\vspace{4pt} 

{\centering\normalsize
The shaded area is approximately $\varepsilon |m|^{-1}$, and 
the\\ number of intersection boxes is $O(|\m|)$.\par
}
\end{figure}
\begin{proof}[Proof of Lemma \ref{iota}]
Let $\mathbb{P} = (\mathbb{I}_n \ 0)$ be the projection from $\R^{N_{n,k}}$ to its first $n$ coordinates, then it is direct that
\begin{align}\label{PP}
\begin{split}
       &\left|\{\iota'\in\mathbb{Z} :  \mathcal{B}_{\varepsilon,\m',\varepsilon\iota'}\cap\mathcal{B}_{\varepsilon,\m,\varepsilon\iota} \ne \emptyset
    \} \right| \\ 
    & \qquad \le \left|\{\iota'\in\mathbb{Z}: \mathbb{P}\mathcal{B}_{\varepsilon,\m',\varepsilon\iota'}\cap \mathbb{P}\mathcal{B}_{\varepsilon,\m, \varepsilon\iota} \ne \emptyset
    \} \right|.
\end{split}
\end{align}
By \eqref{app}, it suffices to prove the desired upper bound with $\mathcal{B}_{\varepsilon,\m,\iota\varepsilon}$
and
$\mathcal{B}_{\varepsilon,\m',\iota'\varepsilon}$ in the right side of \eqref{PP} replaced by 
\[
 \PHI_{\varepsilon \boldsymbol{m}} (\varepsilon \iota \en) + \mathbb{T}_{\varepsilon \boldsymbol{m}, \varepsilon \iota } \De \mathcal{Q}  \quad \mbox{and} \quad \PHI_{\varepsilon \boldsymbol{m'}} (\varepsilon \iota' \en) + \mathbb{T}_{\varepsilon \boldsymbol{m'}, \varepsilon \iota' } \De \mathcal{Q},
\]
respectively.
 Set $\mathcal{Q}_n = (-\frac{1}{2},\frac{1}{2})^n$. By the definition of $\PHI_\theta$ and Proposition \ref{prop-HP},
\[
\mathbb{P} \big( \PHI_{\varepsilon \boldsymbol{m}} (\varepsilon \iota \en) + \mathbb{T}_{\varepsilon \boldsymbol{m}, \varepsilon \iota } \De \mathcal{Q} \big) = \varepsilon \iota \en + \WW(\varepsilon \boldsymbol{m}) D_\varepsilon \mathcal{Q}_n,
\]
where we recall that $D_\varepsilon = \mbox{diag}\{ 1,\cdots, 1,\varepsilon\}$.

Therefore, if such two boxes intersects, there exist $y,z \in \mathcal{Q}_n$ such that
\[
\varepsilon \iota \en + \WW(\varepsilon \boldsymbol{m}) D_\varepsilon y = \varepsilon \iota' \en + \WW(\varepsilon \boldsymbol{m'}) D_\varepsilon z .
\]
Equivalently,
\begin{equation}\label{en}
    (\iota - \iota')\varepsilon \en = \WW(\varepsilon \boldsymbol{m'}) D_\varepsilon z - \WW(\varepsilon \boldsymbol{m}) D_\varepsilon y.
\end{equation}

Writing $y=(y',y_n)$ and $z = (z' , z_n)$, we have
\[
\WW(\varepsilon \boldsymbol{m'}) D_\varepsilon z - \WW(\varepsilon \boldsymbol{m}) D_\varepsilon y   = \begin{pmatrix}
    z'-y' \\
    \varepsilon \boldsymbol{m'} \cdot z' + \varepsilon z_n - \varepsilon \boldsymbol{m} \cdot y' -  \varepsilon y_n
\end{pmatrix}.
\]
The identity \eqref{en} gives $z' = y'$, and thus
\[
\iota - \iota'  =  (\boldsymbol{m'}-\boldsymbol{m})\cdot y' +  (z_n-y_n),
\]
which yields $|\iota - \iota'| \lesssim_{n,k} |\boldsymbol{m'}-\boldsymbol{m}|+1$.
\end{proof}

	\section{Proof of Theorem \ref{thm:main}}\label{4}
    In the case $1\le p \le 2$,  Hausdorff-Young inequality implies $\hat{f} \in L^{p'}$.   Since supp$\, \hat{f} \subset \mathcal{V}_{n,k}$, we know $f  =  \hat{f} = 0$.

    Now we consider the case $2< p<p_c(n,k)$.
	Let $\eta_\varepsilon$ be as in Definition \ref{def-FM}. Then $\eta_\varepsilon \equiv 1$ near $\mathcal{V}_{n,k}$, and thus
	\begin{equation*}
f=f\ast\check{\eta}_\varepsilon.
	\end{equation*}
	 We claim that 
	\begin{equation}\label{eta}
\| \check{\eta}_\varepsilon \|_{L^{p'}}\rightarrow 0, \quad \mbox{as}\quad   \varepsilon\rightarrow 0^{+}
	\end{equation}
    whenever $2<p<p_c(n,k)$, here $\frac{1}{p}+ \frac{1}{p'}=1$. 
	This implies $\|f\|_{L^\infty}=0$ by Young's inequality, and completes the proof.

The proof of \eqref{eta} is via an $L^1$-$L^2$ interpolation argument.  By Plancherel's theorem and \eqref{def-eta}, we have 
\begin{align*}
\|
\check{\eta}_\varepsilon\|_{L^2}^2&
=
\Big\| \varepsilon^{n-1} \sum_{\boldsymbol{m}
\in\left[0,\varepsilon^{-1}\right]^{n-1} \cap \N^{n-1} } \check{\eta}_{\varepsilon,\varepsilon\boldsymbol{m}} \Big\|_{L^2}^2
=
\Big\|
\varepsilon^{n-1} \sum_{\boldsymbol{m}
} \eta_{\varepsilon,\varepsilon\boldsymbol{m}}
\Big\|_{L^2}^2 
\\
& = \varepsilon^{2n-2}
\int_{\R^{N_{n,k}}}
\sum_{\boldsymbol{m},\boldsymbol{m}' } \eta_{\varepsilon,\varepsilon\boldsymbol{m}}(\xi) \, \eta_{\varepsilon,\varepsilon\boldsymbol{m}'}(\xi) 
\,d\xi 
\\
& \le \varepsilon^{2n-2} \sum_{\boldsymbol{m},\boldsymbol{m}' } \sum_{\iota,\iota'\in[-r_0/\varepsilon,r_0/\varepsilon]} |\mathcal{B}_{\varepsilon,\boldsymbol{m},\varepsilon\iota}\cap \mathcal{B}_{\varepsilon,\boldsymbol{m}',\varepsilon\iota'}|.
		\end{align*}
    Using  Theorem \ref{volest}, we obtain
\begin{align}\label{apply}
        \begin{split}
            \sum_{\iota,\iota'\in[-r_0/\varepsilon,r_0/\varepsilon]} |\mathcal{B}_{\varepsilon,\boldsymbol{m},\varepsilon\iota}\cap  \mathcal{B}_{\varepsilon,\boldsymbol{m}',\varepsilon\iota'}| 
         &= \sum_{ \iota \in[-r_0/\varepsilon,r_0/\varepsilon]} \sum_{ \substack{ \iota'\in[-r_0/\varepsilon,r_0/\varepsilon]\\ \mathcal{B}_{\varepsilon,\boldsymbol{m}, \varepsilon \iota} \cap \mathcal{B}_{\varepsilon, \boldsymbol{m}', \varepsilon \iota'} \ne \emptyset}}  |\mathcal{B}_{\varepsilon,\boldsymbol{m},\varepsilon\iota}\cap \mathcal{B}_{\varepsilon,\boldsymbol{m}',\varepsilon\iota'}| \\
        & \lesssim_{n,k} \sum_{ \iota \in [-r_0/\varepsilon,r_0/\varepsilon]} \sum_{ \substack{  \iota' \in [-r_0/\varepsilon,r_0/\varepsilon] \\ \mathcal{B}_{\varepsilon,\boldsymbol{m}, \varepsilon \iota} \cap \mathcal{B}_{\varepsilon, \boldsymbol{m}', \varepsilon \iota'} \ne \emptyset}}  \frac{ \varepsilon^{A_{n,k}} }{ \left( 1 + |\boldsymbol{m}'-\m | \right)^{ n+1 } }.
        \end{split}
    \end{align}
 By Lemma \ref{iota}, the right side can be bounded by
    \[
      \sum_{ \iota \in[-r_0/\varepsilon,r_0/\varepsilon]}  \frac{ \varepsilon^{A_{n,k}} }{ \left( 1 + |\boldsymbol{m}'-\m | \right)^{ n } } \sim_{n,k} \frac{ \varepsilon^{ A_{n,k}-1 } }{ \left( 1 + |\boldsymbol{m}'-\m | \right)^{ n } } =: \mathcal{C}_{\boldsymbol{m'},\boldsymbol{m},\varepsilon}.
    \]
Therefore, 
\begin{align*}
\|\check{\eta}_\varepsilon\|_{L^2}^2 \lesssim_{n,k} \varepsilon^{2n-2} \sum_{\boldsymbol{m},\boldsymbol{m}' } 
\mathcal{C}_{\boldsymbol{m'},\boldsymbol{m},\varepsilon}.
\end{align*}
By dyadic decomposition in $|\boldsymbol{m'}-\boldsymbol{m}|$, it holds that
\[
\sum_{\boldsymbol{m},\boldsymbol{m}' } \mathcal{C}_{\boldsymbol{m'},\boldsymbol{m},\varepsilon}
\le
\sum_{\boldsymbol{m'}
= \boldsymbol{m}} \mathcal{C}_{\boldsymbol{m'},\boldsymbol{m},\varepsilon} + \sum_{  \boldsymbol{m} } \sum_{\ell \ge 0} \sum_{\substack{\boldsymbol{m}'\\
    2^{\ell} \le |\boldsymbol{m}'-\boldsymbol{m}| \le 2^{\ell+1} }} \mathcal{C}_{\boldsymbol{m'},\boldsymbol{m},\varepsilon}.
\]
Denote the right side by $I_1 + I_2$. It is direct that $I_1 \sim_{n,k}  \varepsilon^{A_{n,k}-n}$. Moreover,
\begin{align*}
    I_2 &\lesssim_{n,k}   \sum_{  \boldsymbol{m} } \sum_{\ell\ge 0} 2^{ (n-1)\ell } \, \frac{\varepsilon^{A_{n,k}-1}}{2^{n\ell}}\\
    &\lesssim_{n,k}  \varepsilon^{1-n} \varepsilon^{A_{n,k}-1} \sum_{\ell \ge 0} 2^{-\ell} \, \lesssim_{n,k} \,  \varepsilon^{A_{n,k}-n}.
\end{align*}
As a consequence, $\|\check{\eta}_\varepsilon\|_{L^2}^2 \lesssim_{n,k} \varepsilon^{A_{n,k}+n-2}$. Indeed, after minor modifications of the above argument, we can deduce essential orthogonality $\|\check{\eta}_\varepsilon\|_{L^2}^2 \sim_{n,k} \varepsilon^{A_{n,k}+n-2}$, for our proof the upper bound is already sufficient.

On the other hand, $\check{\eta}_\varepsilon$ has the following $L^1$ estimate.
    \begin{align*}
        \|\check{\eta}_\varepsilon\|_{L^1} 
        & =
        \varepsilon^{n-1}
        \bigg\| \sum_{\boldsymbol{m} }
        \check{\eta}_{\varepsilon,\varepsilon\boldsymbol{m}} \bigg\|_{L^1}  \le  \varepsilon^{n-1}
\sum_{\boldsymbol{m} }  \left\|\check{\eta}_{\varepsilon,\varepsilon\boldsymbol{m}} \right\|_{L^1}
        \lesssim_{n,k}
        \varepsilon^{n-1}
        \cdot\frac{1}{\varepsilon^n} =\varepsilon^{-1}.
    \end{align*}
    To prove the last ``$\lesssim_{n,k}$" above,  by \eqref{def-eta} and \eqref{def-psi},
    \[
    \| \check{\eta}_{\varepsilon,\varepsilon\boldsymbol{m}} \|_{L^1} \le \sum_{\iota\in [-r_0/\varepsilon,r_0/\varepsilon]}  \| \check{\psi}_{\varepsilon,\varepsilon \m,\iota} \|_{L^1} \lesssim_{n,k} \sum_{\iota\in [-r_0/\varepsilon,r_0/\varepsilon]} 1 \le 2r_0 \, \varepsilon^{-1}, 
    \]
    where we have used the facts   that the supports of $\{\kappa_{\varepsilon,\theta,\iota} \}_\iota$ are finitely overlapping (by Lemma \ref{iota} in the case $\m = \m'$) and $\| \check{\kappa}_{\varepsilon,\theta, \iota} \|_{L^1} \lesssim_{n,k} 1$.
    
	Interpolation between $L^1$ and $L^2$ gives 
    \[
    \| \check{\eta}_\varepsilon \|_{L^{p'}} \lesssim_{n,k} \varepsilon^{p_c(n,k) -1 - \frac{p_c(n,k)}{p'}}=\varepsilon^{\frac{p_c(n,k)}{p}-1},\quad p_c(n,k) = n + \binom{n+k}{n+1},
    \]
    which yields \eqref{eta}. The proof is completed.

    \section*{Acknowledgments}
     We would like to thank Professors Shaoming Guo, Ruixiang Zhang and Pavel Zorin-Kranich for helpful discussions, especially for introducing us the method of rotation symmetry. We also thank Professor Pavel Zorin-Kranich for pointing out a simplification in the original proof of Proposition \ref{lem-T}.

     \section*{AI Declaration}
In the proof of Proposition \ref{lem-T}, ChatGPT-5.6 was used to assist with elementary matrix calculations and checked by the authors, who take full responsibility for them. During the writing, ChatGPT-6 was used to generate the figures. 

	\bibliographystyle{amsplain}
	\bibliography{references2}

@article {Gar02,
    AUTHOR = {Gardner, R. J.},
     TITLE = {The {B}runn-{M}inkowski inequality},
   JOURNAL = {Bull. Amer. Math. Soc. (N.S.)},
  FJOURNAL = {American Mathematical Society. Bulletin. New Series},
    VOLUME = {39},
      YEAR = {2002},
    NUMBER = {3},
     PAGES = {355--405},
      ISSN = {0273-0979,1088-9485},
   MRCLASS = {26D15 (52A40)},
MRREVIEWER = {Apostolos\ A.\ Giannopoulos},
       DOI = {10.1090/S0273-0979-02-00941-2},
       URL = {https://doi.org/10.1090/S0273-0979-02-00941-2},
}

@article{AgranovskyNarayanan2004,
	author  = {Agranovsky, M. L. and Narayanan, E. K.},
	title   = {{$L^p$-integrability, supports of {Fourier} transforms
			and uniqueness for convolution equations}},
	journal = {J. Fourier Anal. Appl.},
	volume  = {10},
	number  = {3},
	pages   = {315--324},
	year    = {2004},
	doi     = {10.1007/s00041-004-0986-4}
}

@article{SenthilRaani2014,
	author  = {Senthil Raani, K. S.},
	title   = {{$L^p$-integrability, dimensions of supports of
			{Fourier} transforms and applications}},
	journal = {J. Fourier Anal. Appl.},
	volume  = {20},
	number  = {4},
	pages   = {801--815},
	year    = {2014},
	doi     = {10.1007/s00041-014-9334-5}
}

@article{GuoIosevichZhangZorinKranich2025,
	author  = {Guo, Shaoming and Iosevich, Alex and Zhang, Ruixiang
		and Zorin-Kranich, Pavel},
	title   = {{$L^p$ integrability of functions with {Fourier}
			support on a smooth space curve}},
	journal = {Indiana Univ. Math. J.},
	volume  = {74},
	number  = {6},
	pages   = {1849--1853},
	year    = {2025},
	doi     = {10.1512/iumj.2025.74.60590}
}

@article{DuanPhamRyou2025,
	author  = {Duan, Shengze and Pham, Minh-Quy and Ryou, Donggeun},
	title   = {{$L^p$-integrability of functions with {Fourier}
			supports on fractal sets on the moment curve}},
	journal = {J. Funct. Anal.},
	volume  = {289},
	number  = {12},
	pages   = {111185},
	year    = {2025},
	doi     = {10.1016/j.jfa.2025.111185}
}

@article{Drury1985,
	author  = {Drury, S. W.},
	title   = {Restrictions of {Fourier} transforms to curves},
	journal = {Ann. Inst. Fourier (Grenoble)},
	volume  = {35},
	number  = {1},
	pages   = {117--123},
	year    = {1985},
	doi     = {10.5802/aif.1001}
}

@article{Hua1952,
	author  = {Hua, Loo-Keng},
	title   = {On the number of solutions of {Tarry's} problem},
	journal = {Acta Sci. Sinica},
	volume  = {1},
	pages   = {1--76},
	year    = {1952}
}

@article{ArkhipovKaratsubaChubarikov1980,
	author  = {Arkhipov, G. I. and Karatsuba, A. A. and
		Chubarikov, V. N.},
	title   = {Trigonometric integrals},
	journal = {Math. USSR-Izv.},
	volume  = {15},
	number  = {2},
	pages   = {211--239},
	year    = {1980},
	doi     = {10.1070/IM1980v015n02ABEH001212}
}

@article{Ikromov1997,
	author  = {Ikromov, I. A.},
	title   = {On the convergence exponent of trigonometric integrals},
	journal = {Tr. Mat. Inst. Steklova},
	volume  = {218},
	pages   = {179--189},
	year    = {1997}
}

@misc{Mockenhaupt1996,
	author       = {Mockenhaupt, Gerd},
	title        = {Bounds in {Lebesgue} spaces of oscillatory
		integral operators},
	howpublished = {Habilitationsschrift, Universit{\"a}t Siegen},
	address      = {Siegen},
	year         = {1996}
}

@article{BasuGuoZhangZorinKranich2026,
	author  = {Basu, Saugata and Guo, Shaoming and Zhang, Ruixiang
		and Zorin-Kranich, Pavel},
	title   = {A stationary set method for estimating oscillatory
		integrals},
	journal = {J. Eur. Math. Soc.},
	volume  = {28},
	number  = {5},
	pages   = {2067--2100},
	year    = {2026},
	doi     = {10.4171/JEMS/1506}
}

\end{document}